\theoremstyle{definition}
\newtheoremstyle{example}{15pt}{20pt}%
     {}
     {}
     {\bfseries}
     {}
     {1pt}
     {\thmname{#1}\thmnumber{ #2.}~\thmnote{\textit{\textbf{#3}}}%
     \\[.15cm]\unskip\nobreak}
\theoremstyle{definition}
\newtheoremstyle{remark}{15pt}{20pt}%
     {}
     {}
     {\bfseries}
     {}
     {1pt}
     {\thmname{#1}\thmnumber{ #2.}~\thmnote{\textit{\textbf{#3}}}%
     \\[.15cm]\unskip\nobreak}
\newtheorem{example}{Example}[section]
\newtheorem{remark}{Remark}[section]
\newtheorem{definition}{Definition}[section]
\newtheorem{theorem}{Theorem}[section]
\def\Ex#1{Ex.~\ref{#1}}
\def\Re{\mathbb{R}}
\def\posRe{\mathbb{R}^+}
\def\fee{\upphi}
\def\clA{\mathcal{A}}
\def\clH{\mathcal{H}}
\newcommand\gobblepars{%
    \@ifnextchar\par%
        {\expandafter\gobblepars\@gobble}%
{}}
\def\whamit#1{\smallbreak\pagebreak[3]%
	\noindent\textit{#1}\ \ \gobblepars}
\def\wham#1{\smallbreak\pagebreak[3]%
	\noindent\textbf{#1}\ \ \gobblepars}
\def\whamb{\wham{$\bullet$}}
\def\textsubscript#1%
\def\tilh{\tilde h}
\def\tilz{\tilde z}
\def\tilp{\tilde p}
\def\Hil{\mathbf H}
\def\Ltwo{\mathbf{L}^2}
\newcommand{\Real}{\mathbb R}
\def\frakp{\mathfrak{p}}
\def\frakh{\mathfrak{h}}
\def\frakz{\mathfrak{z}}
\def\solp{\frakp}
\def\solh{\frakh}
\def\solz{\frakz}
\def\pop{\hbox{\tiny (POP)}}
\def\Kur{\hbox{\tiny (Kur)}}
\def\Theorem#1{Thm.~\ref{#1}}
\def\Sec#1{Sec.~\ref{#1}}
\def\Fig#1{Fig.~\ref{#1}}
\def\FRAC#1#2#3{\genfrac{}{}{}{#1}{#2}{#3}}
\def\half{{\mathchoice{\FRAC{1}{1}{2}}%
{\FRAC{2}{1}{2}}%
{\FRAC{3}{1}{2}}%
{\FRAC{4}{1}{2}}}}
\def\gen{{\cal D}}
\newcommand{\field}[1]{\mathbb{#1}}
\def\posRe{\field{R}_+}
\def\Re{\field{R}}
\def\Co{\field{C}}
\def\Sec#1{Sec.~\ref{#1}}
\def\eqdef{\mathrel{:=}}
\newcounter{rmnum}
\newenvironment{romannum}{\begin{list}{{\upshape (\roman{rmnum})}}{\usecounter{rmnum}
\setlength{\leftmargin}{6pt}
\setlength{\rightmargin}{4pt}
\setlength{\itemindent}{10pt}
}}{\end{list}}
\newcommand{\ud}{\,\mathrm{d}}
\newcommand{\barc}{\bar{c}}
\def\Expect{{\sf E}}
\newcommand{\avgT}{\lim_{T\to\infty}\frac{1}{T}\!\!\!\int_0^T}
\def\intsw{\int_{\Omega}\!\int_0^{2\pi}}
\newcommand{\Lscr}{\mathcal L}
\newcommand{\dtheta}{\theta\theta}
\newcommand{\ptheta}{\partial_{\theta}}
\newcommand{\pthetaB}{\partial^2_{\dtheta}}
\newcommand{\pt}{\partial_{t}}
\def\Prob{{\sf P}}
\def\Expect{{\sf E}}
\newcommand{\ith}{i^\text{th}}
\def\PointwiseErr{\Lscr^\alpha}
\def\wavePointwiseErr{{\PointwiseErr}}
\def\wavepProjC{P_c}
\def\wavepProjS{P_s}
\def\Kur{\hbox{\tiny \textrm{KUR}}}
\def\phz{\zeta}
\def\mfh{h}
\def\mfH{H}
\def\mfc{\bar{c}}
\def\Hopt{\underline{H}}
\def\waveh{h_0}
\def\wavec{\mfc_0}
\def\waveH{H_0}
\def\waveHopt{\Hopt_0}
\def\pop{\hbox{\tiny (POP)}}
\def\varble{\, \cdot\,}
\def\GalErr{e_{\hbox{\tiny Gal}}}
\def\waveGalErr{{\GalErr}}
\newcommand{\Noise}{\half\sigma^2}
\def\PtwoEqm{\bar{\alpha}_i}
\def\popamp{\wavepProjC^2 + \wavepProjS^2}
\newcommand{\pmat}[1]{\begin{pmatrix}#1 \end{pmatrix}}
\title{Functional role of synchronization: A mean-field control perspective\thanks{This research was supported by AFOSR under Grant No. FA9550-23-1-0060 and NSF under Grants   2336137 (PM)
and 2306023 (SM).}}
 \author{Prashant Mehta\thanks{University of Illinois at Urbana-Champaign, Urbana IL,	{\tt mehtapg@illinois.edu.}} 
 \and
 	Sean  Meyn\thanks{Department of ECE at the University of Florida, Gainesville FL,		{\tt meyn@ece.ufl.edu.}} 
}
\begin{document}
 	\maketitle

 \begin{abstract}%
The broad goal of the research surveyed in this article is to develop methods for understanding the aggregate behavior of interconnected dynamical systems, as found in mathematical physics, neuroscience, economics, power systems and  neural networks.   Questions concern prediction of emergent (often unanticipated) phenomena,   methods to formulate distributed control schemes to influence this behavior, and  these topics  prompt many 
other questions in the domain of learning.     The area of \textit{mean field games},   pioneered by Peter Caines,    are well suited to addressing these topics.    The approach is surveyed in the present paper within the     context of controlled coupled oscillators.   
\smallskip

\noindent
Keywords:  Mean-field games, synchronization, coupled oscillators, interacting particle systems, feedback particle filter

\end{abstract}%

\begin{quote}
\textit{For our friend and mentor, Peter Caines, on his 80th birthday!}
\end{quote}

\section{Introduction}
\label{sec:intro}

Our friend and mentor Peter Caines has, together with his colleagues,  created new foundations for studying collective dynamics in complex  systems. Of particular inspiration to us has been his pioneering work in mean-field games (MFGs)  launched two decades ago \cite{cai21,huacaimal07,huang06large}, and the related field of mean-field control. 
Peter pointed the way to \textit{both} formulate and solve the problem of collective dynamics arising in a large population of heterogeneous dynamical systems. 
In this paper we survey some elements of MFGs within the context of controlled coupled oscillators.

We begin by introducing a model for a single oscillator: 
\begin{equation}
\ud \theta(t) = (\omega + u(t))\ud t + \sigma \ud \xi(t),\quad \text{mod} \;2\pi  
\label{e:osc}
\end{equation}
where
	$\theta(t) \in[0,2\pi)$ is the phase of the oscillator at time $t$,
$\omega$ is the nominal frequency with units of radians-per-second, $\{\xi(t):t\geq 0\}$ is a standard Wiener process, and $u(t)$ is a control signal whose interpretation depends on the context.  Unless otherwise noted,  the SDEs are interpreted in their It\^{o} form.

In this paper we consider a (possibly heterogeneous) population of phase oscillators,  in which the signal  $u_i(t)$ for the $i^{\text{th}}$ oscillator at time $t$ is determined by a combination of local and global information.  

A key insight from Peter's work is to formulate and analyze the collective dynamics of a large population as a solution of an optimal control problem in an infinite-population (mean-field) limit.  The transformation from a large population to its mean-field limit is also a theme in prior analysis of coupled oscillator models in the work of Strogatz~\cite{strogatz91stability,strogatz00from}.   Mean field games go beyond since the approach allows for incorporation of distributed decision making.  

After considering the most famous example of Kuramoto~\cite{kuramoto75international},  we describe the control-theoretic approach in \Sec{s:controlK}.

\subsection{Kuramoto coupled oscillator model and phase transition}

This celebrated example consists of  $N$ oscillators that evolve according to
\begin{equation}
\label{e:Kuramoto}
\ud \theta_i(t) = \left( \omega_i + \frac{\kappa}{N}\sum_{j=1}^N
\sin(\theta_j(t)-\theta_i(t))\right)\ud t  + \sigma \ud \xi_i(t),\quad\text{mod}\;2\pi,\quad
i=1,2,\hdots, N
\end{equation}
where $\theta_i(t)$ is the phase of the $i^{\text{th}}$-oscillator at time
$t$, $\omega_i$ is its natural frequency,  $\{\xi_i(t):t\geq 0,\;1\leq i\leq N\}$ are mutually independent standard Wiener processes, and $\kappa$ is the coupling parameter. 
In a heterogeneous
population, the frequency $\omega_i$ is 
sampled i.i.d. from a density $g(\omega)$ with support on $\Omega:=[1-\gamma,1+\gamma]$.  The parameters $\gamma$ and $\kappa$ are used to model the heterogeneity and the strength of network coupling, respectively.

\begin{figure*}[htp]
    \centering
   \includegraphics[width=0.35\hsize]{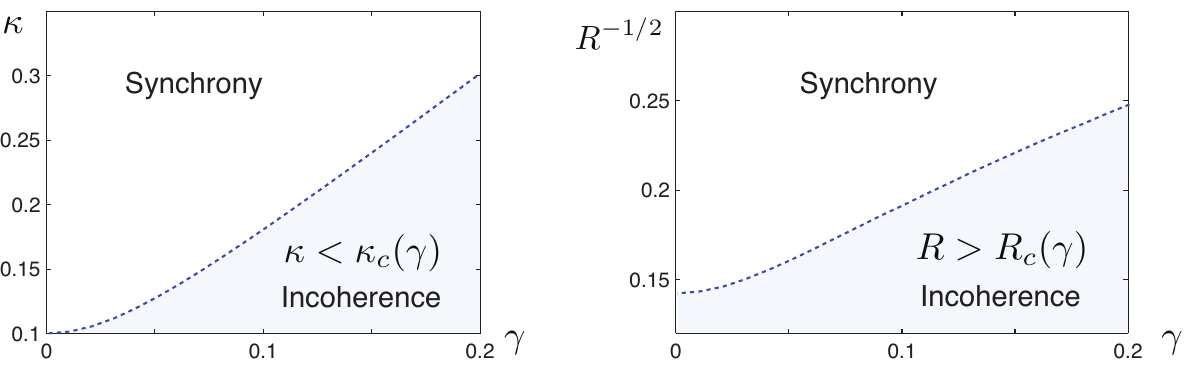}\vspace{-0.0in}
    \caption{Bifurcation diagram for the Kuramoto model.}
    \vspace{-0.in}
    \label{f:KuraBifur}
\end{figure*}

\textit{Phase transition}.
The dynamics of the Kuramoto model are visualized using a bifurcation
diagram in the $(\kappa,\gamma)$ plane, which in particular illustrates the
emergence of a phase transition. The stability boundary $\kappa =
\kappa_c(\gamma)$ shown in Fig.~\ref{f:KuraBifur} provides
an illustration of the phase transition: The oscillators behave incoherently
for $\kappa<\kappa_c(\gamma)$, and synchronize for $\kappa>\kappa_c(\gamma)$.
In the former incoherent population regime, the oscillators rotate close to their own
natural frequency and hence the trajectory $\{\theta_i(t):t\geq 0\}$ of the $i^{\text{th}}$-oscillator is approximately
independent of the population. In the synchronized regime, each oscillator
rotates with a common frequency.

While earlier studies from Kuramoto, Strogatz and others has been informative in understanding the phase transition from incoherence and synchrony, the relevance of such analysis to applications suffered from a fundamental problem: Why are these dynamics relevant to the functional role of the coupled oscillators?

Peter's research is helpful to systematically address this question.  Among the outcomes of the research program that he initiated, we point out two: (1) The functional role is encoded in the specification of an optimal control objective; and (2) an optimization formulation naturally yields algorithms for learning approximately optimal distributed control laws.

\subsection{A control-theoretic perspective on synchronization}
\label{s:controlK}

We introduce here a coupled oscillator model with finer statistical details.  The model requires specification of two densities: $p_0$ supported on $[0, 2\pi)$  
and $g$ supported on $\Omega$.     For a population of $N$ oscillators,   nominal frequency values $\{ \omega_i \}$  are i.i.d., with common marginal density $g$,    and initial phases $\{ \theta_i(0) \}$ are i.i.d.,  with common marginal density $p_0$.  For $t\ge 0$ the evolution of the $i^{\text{th}}$ phase vaiable is described as the SDE,   
\begin{equation}
\label{eq:theta_eqn_intro}
\ud \theta_i(t) = (\omega_i + u_i(t) )\ud t +  \sigma \ud \xi_i(t), \quad\text{mod}\;2\pi,  \quad \theta_i(0) \stackrel{\text{i.i.d}}{\sim} p_0, \qquad
i=1,2,\hdots, N,
\end{equation}
in which the standard Wiener processes $\{\xi_i(t):t\geq 0,\;1\leq i\leq N\}$ are mutually independent, and
$\{u_i(t):t\geq 0,\; 1\leq i\leq N\}$ are the control inputs. We let $p^{i,N}(\varble ,t)$ denote the marginal density for the $i^{\text{th}}$ oscillator at time $t$.

Kuramoto's model~\eqref{e:Kuramoto} is a particular case of~\eqref{eq:theta_eqn_intro} in which  
\begin{equation}
\label{e:Kuramoto_control}
u_i(t) = - \frac{\kappa}{N}\sum_{j=1}^N
\sin(\theta_i(t)-\theta_j(t))   \,, \qquad 1\le i\le N
\end{equation}
The right hand side is an example of a mean-field (or McKean-Vlasov) type control law~\cite{bensoussan2013mean}:   
 the control input $u_i(t) $  is a function of the local state $\theta_i(t)$ and the empirical distribution of the population.  
 As $N\to\infty $  the densities $p^{i,N}(\varble ,t)$  converge to a common density  $p(\varble,t)$, 
 which yields the mean-field approximation
 $u_i(t) \approx - \kappa \int_0^{2\pi} \sin(\theta_i(t) - \vartheta) p(\vartheta,t) \ud \vartheta$.

Going beyond the classical Kuramoto model,  the density $p(\varble,t)$  has two different interpretations that depend on the functional role:
\begin{enumerate}
\item \textbf{Control objective.} $p(\varble,t)$ has the meaning of a density where $u_i(t)$ is according to a mean-field type optimal control law. This requires a definition of an optimal control objective. In a distributed setting, a game formulation is natural. 
\item \textbf{Estimation objective.} $p(\varble,t)$ has the meaning of a posterior density in nonlinear filtering. This requires a definition of a hidden Markov process. 
\end{enumerate}

Apart from control and estimation, the third objective is that of learning. 
The \textbf{learning objective} is to learn an approximate control law from some parameterized family.  In optimal control setting, such an objective is referred to as policy optimization.
The goal is to design learning rules to learn the optimal parameters. These rules are designed such that, upon completion of learning, the parametrized control law gives a good approximation of the optimal control law.

In this paper, the three objectives are illustrated through a discussion of our prior work: \Sec{sec:mfg} concerns control,~\Sec{sec:learning} is on learning, and~\Sec{sec:FPF} is on estimation.  Before delving in to the details, we comment briefly upon the importance of phase transitions and synchronization in applications.

\subsection{Emergence and phase transition in applications}
\label{s:Melania}

\textit{Emergence} is an important concept in fields ranging from  mathematical physics, neuroscience, economics to AI. 
An example that captures the interest of the  popular media is  the emergent and unexpected capabilities of large language models (LLMs): At a certain scale of computation and model size, there is a phase transition whereby a novel (and often unexpected) capability \textit{emerges} that is not present in smaller models~\cite{wei2022emergent,geshkovski2023mathematical}.    

From our own prior work, an important example is the research found in power systems, in which a vast population of electric loads under distributed control might synchronize without careful design, destabilizing the power grid \cite{meybarbusyueehr15}.  We describe this example followed by a discussion of the coupled oscillator as models of oscillatory activity in cortical networks.

\wham{Intelligent appliances.}

Since the 1970s it has been recognized that demand-side flexibility of electric loads can help to maintain supply-demand balance in the power grid.  The value is far greater today, since volatile energy from the sun and wind increases the need for balancing resources.  One goal of the the Energy Policy Act of 2005 was to accelerate the adoption of demand-side resources \cite{qdr2006benefits}.

For the grid operator or utility company, the desired services include the following:
\whamb  Peak shaving and valley filling.
\whamb Resources that can ramp up (or down) quickly due to a surge in demand or supply of electricity.
\whamb   There is high frequency volatility that is managed today by generators as part of  automatic generation control.

The term ``virtual energy storage'' (VES) is used to describe these services obtained from electric loads rather than generation or traditional storage devices.  
 Since 2005 there has been a resurgence of interest within the academic community:  see the surveys included in  \cite{IMA18}, in particular \cite{cheche17b,almesphinfropauami18,chehasmatbusmey18,moymey17a}.   Notable articles and theses include   \cite{johThesis12,linbarmeymid15,YueChenThesis16,NeilCammardellaThesis21,JoelMathiasThesis21,bencolmal19,lennazmal21}.   

 Consider the special case of residential water heaters, which are inherently energy storage devices---they store hot water!      As long as the water temperature stays within desired bounds, the energy consumption is highly flexible.    The objective of   load control is to provide grid services while (1) ensuring constraints on quality of service are maintained for all time, and (2) minimizing the chance of  synchronization.

 The heating-cooling temperature dynamics in a single residential water heater are naturally modeled as a single oscillator \cite{busmeycam23} when water temperature control is carried out based on a hysteresis policy.  It is possible to modify this policy so that it takes    into account a reference signal from the grid operator, so that aggregate power consumption from the ensemble of water heaters tracks a reference with high accuracy.  It is crucial that each load has available some global information to ensure reliable tracking, and to minimize the probability of the worst kind of emergent behavior:  synchronization of the population of loads, leading to massive surges in power consumption.  A range of approaches may be found in the aforementioned work;  in particular, \cite{lennazmal21} employs the MFG approach much like in the present paper.

 \wham{Neuroscience.}  The 
mammalian auditory cortex is a laminar structure organized across six
layers with defined input and output layers that support
oscillatory rhythms across unique frequency
bands~\cite{winkowski2013laminar,king2018recent,moerel2014anatomical}. While rhythms
are ubiquitous---conserved across every nervous system from fruit flies
to humans---their functional role is one of the most fundamental
mysteries in all of neuroscience~\cite{buzsaki2013scaling}. 
 
There is well-established normal form reduction  procedure to approximate the behavior of a single neuron by an oscillator model~\cite{Gucken_75,Brown_Moehlis_04,Monga2018}. An oscillator model captures two aspects of the dynamics: (1) The tonic firing of a neuron; and (2) The effect of the input $u(t)$ on the phase (this requires also an additional concept of a phase response curve).  The implicit assumption is that the input $u(t)$ affects only the phase of the tonic firing.  The assumption is justified if the limit cycle is strongly attractive (i.e., Floquet exponents have large negative real parts) and $u(\cdot)$ is small.  In a network, the input to a neuron arises from a cumulative effect of the population and the assumption is referred to as a weakly coupled network~\cite{Ermentrout}. See~\cite{bick2020understanding} for a recent review on the use of \textit{coupled} oscillator models in neuroscience.

\subsection{Relevant literature on oscillator models and synchronization}

Since the model was introduced in the 
1970s~\cite{kuramoto75international} several extensions of the basic
model~\cite{kalloniatis2017synchronisation,Barreto2008,Murphy2010} and
analytical
techniques~\cite{strogatz91stability,strogatz00from,Ott2008,Martens2009,smith2020finite}
for its solution have
been developed.  During the past few decades, there have been notable
contributions related to \textbf{(1)} extension of the normal form
reduction procedure to capture the effects of
amplitude coordinate~\cite{wedgwood2013phase,shirasaka2017phase,Wilson2020}; \textbf{(2)}
study of synchronization and entrainment in the presence of an
additive external (typically harmonic)
input~\cite{OttJetlag,Sakaguchi1988,Kori2006,Antonsen2008,Popovych2011,Childs2008,Hodson2021};
and \textbf{(3)} control of
synchronization~\cite{WILSON2022327,MONGA2019115,dorfler2014synchronization,meybarbusyueehr15,taydhocal16,nikcanfra22,busmeycam23}.

Much of this paper is a survey of mean field games and related topics in learning for control systems models inspired by the Kuramoto model.  The survey is largely drawn from our prior work on these topics~\cite{yin2011synchronization,yin2014efficiency,huibing_TAC14,taoyang_TAC12,variational_FPF,taghvaei2023survey}.  The novelty comes from the presentation of these disparate works as part of the same overarching theme which is very much inspired by Peter's work.   

\subsection{Paper outline}

The remainder of this paper is organized as follows.  The MFG formulation for the coupled oscillators is described in~\Sec{sec:mfg}.  Its application to development of learning algorithms appears in~\Sec{sec:learning}. \Sec{sec:FPF} concerns nonlinear filtering where the coupled oscillator feedback particle filter (FPF) algorithm is discussed. \Sec{sec:conc} contains some conclusions.

\section{Mean-field oscillator game}
\label{sec:mfg}

\subsection{Problem formulation}

Consider the dynamic game in which the  $i^{\text{th}}$ oscillator, subject to dynamics~\eqref{eq:theta_eqn_intro}, minimizes  
\begin{align}
    \eta_i^{\pop}(u_i;u_{-i}) = \limsup_{T\to\infty}\frac{1}{T}
    \int_0^T  \left( \frac{1}{N} \sum_{j=1}^N c^\bullet(\theta_i,\theta_j) +\half  R u_i^2 \right)\ud s 
     \label{eqn:obj}
\end{align}
where   $u_{-i}= (u_j)_{j\neq i}$.
A Nash equilibrium in control
policies is given by $\{u_i^*:1\leq i\leq N\}$ such that
$u_i^*$ minimizes $\eta_i^{\pop}(u_i;u_{-i}^*)$ for $i = 1, \hdots, N$.

The continuous and non-negative cost function   $c^{\bullet}:[0,2\pi)\times [0,2\pi)\mapsto \posRe$  and the control
penalty parameter $R>0$ are assumed to be
common to the entire population.

The following additional assumptions are imposed on the cost:   
 
\whamit{Spatially invariance:}
		$c^{\bullet}(\vartheta,\theta)=c^{\bullet}(\vartheta-\theta)$
		
\whamit{Even:} 
		$c^{\bullet}(\theta) =  c^{\bullet}(-\theta)$.
  Consequently, it admits the  Fourier series expansion 
\begin{align*}
c^\bullet(\theta) &= C_0^\bullet + \sum_{k=1}^{\infty}
C_k^\bullet\cos(k \theta),\qquad \theta\in[0,2\pi)
\label{eqn:Fourier_for_cost}
\end{align*}

We will see that the special case described next (\Ex{ex:cost_kura})  is useful to relate dynamic game defined here to the Kuramoto model:

\begin{subequations}   

\begin{example} 
\label{ex:cost_kura}
The spatially invariant and even cost function defined by
\begin{align}
 c^\bullet(\vartheta,\theta) &= \tfrac{1}{2}
\sin^2\left(\frac{\vartheta-\theta}{2}\right)
\\
& \text{for which} \quad C_0^\bullet = \tfrac{1}{4} \, , \  C_1^\bullet = - \tfrac{1}{4} \,, \ C_k^\bullet = 0 \ \textit{for } \  k \ge 2 
\end{align} 
 \end{example}
 
\end{subequations}

\begin{remark}
The MFG problem for coupled oscillators, the model~\eqref{eqn:obj}, was introduced by our group in \cite{yin2011synchronization}.  For extensions of this basic model and related papers on synchronization in MFG, see~\cite{carmona2020jet,carmona2023synchronization,cesaroni2024stationary,soner2024viscosity,hofer2024synchronization}. A closely related work is the analysis of the Cucker-Smale model~\cite{nourian2011mean}. 
\end{remark}

\subsection{Forward-backward MFG partial differential equation (PDE)}

We describe here the mean field limit as $N\to\infty$,   defined by  the limiting density $p=p(\theta,t;\omega)$ and a \textit{relative value function}  $h=h(\theta,t;\omega)$. The associated optimal control law in the mean field game is given by 
\begin{align}
\fee(\theta,t;\omega) := -\frac{1}{R}\ptheta h(\theta,t;\omega)
 \label{eqn:optuPDE}
\end{align}
A candidate approximate solution to the dynamic game \eqref{eq:theta_eqn_intro} for finite $N$ is $u_i(t) =  \fee(\theta_i(t),t;\omega_i)$.

These pair $(h,p)$ solve the  forward-backward PDE  
\begin{subequations}
\label{eq:MFG_FB}
\begin{align}
\pt h + \omega\ptheta h  &= \frac{1}{2R}(\ptheta h)^2 -
      \barc(\theta,t) + \eta^* -
    \frac{\sigma^2}{2}\pthetaB h
 \label{eqn:HJB}\\
 \pt p + \omega\ptheta p  &= \frac{1}{R}\ptheta\left[p(\ptheta h)\right] +
    \frac{\sigma^2}{2}\partial^2_{\theta\theta}p
\label{eqn:p}\\
    \barc(\vartheta,t) &= \intsw c^\bullet(\vartheta,\theta)     p(\theta, t;\omega) g(\omega) \ud\theta \ud\omega
\label{eqn:barc}
\end{align}
Equation~\eqref{eqn:HJB} is the Hamilton-Jacobi-Bellman (HJB) equation that describes the solution of minimizing~(\ref{eqn:obj}) under the assumption of a known deterministic mass influence $\bar{c}(\theta,t)$.  
Equation~\eqref{eqn:p} is the Fokker-Planck-Kolmogorov (FPK) equation that describes the evolution of the density with the optimal control input defined according to~\eqref{eqn:optuPDE}. The two PDEs are coupled via~\eqref{eqn:barc}. 

\label{eq:p+h}
\end{subequations}

The \textit{incoherence solution} is defined by
\begin{equation}
 h(\theta,t;\omega)  = \solh_0(\theta)  \eqdef 0
 \qquad
 p(\theta;t,\omega) = \solp_0(\theta)  \eqdef
\frac{1}{2\pi}
\label{e:incoherencesolution}
\end{equation}
Just as in the Kuramoto oscillators literature,  incoherence  means that the phase values $\{\theta_i(t):i=1,2,\hdots,N\}$  are uniformly distributed on the circle  $[0,2\pi]$~\cite{strogatz91stability}. 

Subject to \eqref{e:incoherencesolution}, the control law \eqref{eqn:optuPDE} results in  $\fee(\theta,t;\omega) \equiv 0$, and the cost $\barc$
defined in \eqref{eqn:barc} is independent of $(\vartheta, t)$.

\begin{example}[Continued from \Ex{ex:cost_kura}]\label{ex:cost_kura_2}
With $c^\bullet(\vartheta,\theta) =  \frac{1}{2} \sin^2\left( \frac{\vartheta -     \theta}{2}\right)$,
\[
    \barc(\vartheta,t) =  \frac{1}{2} \frac{1}{2\pi} \intsw \sin^2\left( \frac{\vartheta -
    \theta}{2}\right)g(\omega) \ud\theta \ud\omega
    = \frac{1}{4}
\]
which coincides with the average cost   $\eta^*(\omega) = \eta_0 \eqdef  \barc
$ for all $\omega\in\Omega$.    This value  is approximately consistent with
the finite-$N$ model.   When each control is set to zero we obtain
$\ud\theta_i(t) = \omega_i \ud t   + \sigma\ud\xi_i(t)$ for each $i$, which
results in average cost independent of $i$,
\[
    \avgT   c(\theta_i(t);\theta_{-i}(t))  \ud t =  \frac{N-1}{N}    \eta_0
    \]
There is a trade-off between reducing the cost associated with  $\theta_i(t) \neq
\theta_j(t)$, and reducing the cost of control. These competing costs
suggest a potential phase transition as the parameter $R$ varies from large ($R=\infty $) to small ($R=0+$). 
\end{example}
In order to investigate the phase transition, a linear stability analysis is carried out.

\subsection{Linear stability analysis}

The linearization of the equations~(\ref{eqn:HJB}) - (\ref{eqn:barc}) is taken
about the equilibrium incoherence solution $\solz_0 =(\solh_0,\solp_0)$.  A
perturbation of this solution is denoted $\solz_0 + \tilz = (\solh_0 ,\solp_0
)+(\tilh, \tilp)$, where $\tilz(\theta,t;\omega)=\tilz(\theta+2\pi,t,\omega)$
for all $t\in\posRe $ and $\omega\in\Omega$. Since $p = \solp_0+\tilp$
is a probability density, the perturbation satisfies the normalization
condition $\int_0^{2\pi}\tilp(\theta, t;\omega)\ud\theta = 0$  for all
$t,\omega$.    We impose  a similar normalization condition for the relative value function:
$\int_0^{2\pi}\tilh(\theta,t;\omega)\ud\theta = 0$  for all $t,\omega$,  which is  justified because $h$  is only defined up  to a constant.

When $\tilz$ is small, its evolution is approximated by the linear equation,
\begin{subequations}
\begin{equation}
    \frac{\partial}{\partial t} \tilz(\theta,t;\omega) =\Lscr_R\tilz(\theta,t;\omega)
    \label{eqn:lin_PDE}
\end{equation}
where
\begin{equation*}
    \Lscr_R\tilz(\theta,t;\omega) \eqdef
    \pmat{-\omega\ptheta \tilh - \frac{\sigma^2}{2}\pthetaB \tilh
    \\
    -\omega\ptheta \tilp + \frac{1}{2\pi R}\pthetaB \tilh + \frac{\sigma^2}{2}
    \pthetaB \tilp} -
    \pmat{ \tilde{c}(\theta,t)
    \\
    0}
\end{equation*}
and
\begin{align*}
    \tilde{c}(\theta,t)  &=
     \int_{\Omega}\int_0^{2\pi}c^\bullet(\theta,\vartheta) \tilp(\vartheta, t; \omega)
    g(\omega) \ud \vartheta\ud\omega.
\end{align*}

The local analysis entails well-posedness (existence, uniqueness) and
stability with respect to an infinitesimal initial perturbation of the
population density:
\begin{equation}
\tilde{p}(\theta,0;\omega) = q(\theta,\omega)
\label{eqn:p0_init_condn}
\end{equation}\label{eqn:linear_IVP}
\end{subequations}
where $\int_0^{2\pi}q(\theta, \omega)\ud\theta = 0$.

The analysis requires the introduction of a Hilbert space, taken here to be
$\Ltwo(\Real^+,\Hil\times\Hil)$, where $\Hil$ is a subspace of 
$\Ltwo([0,2\pi]\times\Omega)$.  The space
$\Ltwo([0,2\pi]\times\Omega)$ is defined with respect to the measure
$g(\omega) \ud \omega \ud\theta$ on the product space $[0,2\pi] \times
\Omega$.
For any complex-valued function $v(\theta,\omega)$ on $[0,2\pi] \times
\Omega$ we denote,
\[
\| v\|^2_{\Hil} \eqdef
     \int_0^{2\pi} \int_{\Omega} |  v(\theta,\omega)
     |^2  g(\omega) \ud \omega \ud\theta
\]
The Hilbert space $\Hil$ is defined to be the set of functions for which the
integral is finite, and $\int_0^{2\pi} v(\theta,\omega) \ud\theta = 0$ for all
$\omega\in\Omega$.  
We denote $\Hil^{2}:=\Hil\times\Hil$.

We refer to \eqref{eqn:linear_IVP} as the
linear initial value problem,  which is well-posed if a unique
solution $(\tilde h,\tilde p)(\theta,t;\omega)$ exists in $\Ltwo(\Real^+,\Hil^{2})$ for any initial
perturbation $\tilde p(\theta, 0, \omega) = q(\theta,\omega)\in \Hil$.  Along
with well-posedness, we are interested in local stability of the incoherence
solution:

\begin{definition}
    Consider the incoherence solution $\solz_0=(\solh_0, \solp_0)$ of the
    coupled nonlinear PDE~\eqref{eq:MFG_FB}.  The incoherence
    solution $\solz_0=(\solh_0,\solp_0)$ is linearly asymptotically stable if a solution $\tilp(\theta, t; \omega)$ of the linear initial 
    value problem~\eqref{eqn:linear_IVP}
    with an arbitrary initial perturbation $\tilp(\theta, 0; \omega) =
    q(\theta,\omega)\in \Hil $ exists in $\Ltwo(\Real^+,\Hil^2)$, 
    and satisfies $\|\tilp(\theta,t;\omega)\|_{\Hil} \rightarrow 0 $ as
    $t\rightarrow \infty$.   
\end{definition}

For the stability analysis of the linear initial value
problem~\eqref{eqn:linear_IVP}, it is useful to first
deduce the spectra.   For each non-zero integer $k$, taking either positive or negative values, denote
\begin{equation}
S_c^{(k)} \eqdef  \Bigl\{\lambda\in\Co\, \big| \, \lambda =     \pm\frac{\sigma^2}{2}k^2 - k\omega i \text{ for all }
    \omega\in\Omega\Bigr\}
\label{e:ctsSpec}
\end{equation}
The following is taken from  Thm.~~4.1 of \cite{yin2011synchronization}, whose proof may be found in~\cite[Appendix~VII-D]{yin2011synchronization}.

\begin{theorem}
\label{t:Discrete_and_continuous}
For the linear operator $\Lscr_R:\Hil^{2}\rightarrow \Hil^{2}$,
\begin{romannum}
\item The continuous spectrum is the union of sets $\{S_c^{(k)}  :  |k| \neq 0\}$.
 
\item The discrete spectrum equals the union of sets $\{S^{(k)}_d  :  |k|\neq 0\}$,  where
    \begin{align}
	S^{(k)}_d \eqdef \Bigl\{\lambda\in\Co\ \big | \    C_{|k|}^\bullet  \frac{k^2}{2R}  
	\int_{\Omega}
        \frac{g(\omega)}{(\lambda -\frac{\sigma^2}{2}k^2 + k\omega i)(\lambda +
	\frac{\sigma^2}{2}k^2 + k \omega i)}\ud\omega - 1 = 0
	\Bigr\}
\label{s:Skd}
    \end{align}
 Observe that 
$S^{(k)}_d = \emptyset$  {\rm (the empty set)} if $C_{|k|}^{\bullet}=0$.  

\end{romannum}
\end{theorem}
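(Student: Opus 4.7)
The plan is to exploit translation invariance of $c^\bullet$ in $\theta$ to diagonalize $\Lscr_R$ in the $\theta$-Fourier basis, reducing the spectral problem to a countable family of rank-one perturbations of matrix-valued multiplication operators on $\Ltwo(\Omega,g\,\ud\omega)^2$, one for each nonzero Fourier mode.

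Since every $\tilz\in\Hil^2$ has zero $\theta$-mean, expand $\tilh(\theta,\omega)=\sum_{k\ne 0}\hat h_k(\omega)e^{ik\theta}$ and analogously for $\tilp$, with $\hat h_k,\hat p_k\in\Ltwo(\Omega,g\,\ud\omega)$. The differential operators $\ptheta,\pthetaB$ act diagonally on Fourier modes, and by spatial invariance of $c^\bullet$ the convolution defining $\tilde c$ also acts diagonally: its $k$-th coefficient equals $\pi C_{|k|}^\bullet\langle \hat p_k, g\rangle$, where $\langle \varble,g\rangle := \int_\Omega(\varble)g(\omega)\ud\omega$. Consequently $\Lscr_R=\bigoplus_{k\ne 0}\Lscr_R^{(k)}$ with
\[
\Lscr_R^{(k)}\pmat{\hat h_k\\ \hat p_k}(\omega) = M_k(\omega)\pmat{\hat h_k(\omega)\\ \hat p_k(\omega)} - \pi C_{|k|}^\bullet\langle\hat p_k,g\rangle\pmat{1\\ 0}, \quad M_k(\omega) = \pmat{\tfrac{\sigma^2 k^2}{2}-ik\omega & 0\\ -\tfrac{k^2}{2\pi R} & -\tfrac{\sigma^2 k^2}{2}-ik\omega},
\]
and the spectrum of $\Lscr_R$ is the union over $k\ne 0$ of the spectra of $\Lscr_R^{(k)}$.

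The continuous spectrum of a matrix-multiplication operator on $\Ltwo(\Omega,g\,\ud\omega)^2$ is the essential range of its pointwise eigenvalues. Since $M_k(\omega)$ is lower triangular with eigenvalues $\pm\tfrac{\sigma^2}{2}k^2-ik\omega$, this essential range equals $S_c^{(k)}$. The coupling term is a rank-one (hence compact) perturbation, so by Weyl's theorem on invariance of the essential spectrum the continuous spectrum of $\Lscr_R^{(k)}$ is also $S_c^{(k)}$, proving (i). For (ii), fix any $\lambda\notin S_c^{(k)}$ so that $(M_k(\omega)-\lambda I)^{-1}$ is uniformly bounded in $\omega$; the eigenvalue system $\lambda\hat z_k=\Lscr_R^{(k)}\hat z_k$ then reduces explicitly. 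The second row gives $\hat p_k(\omega) = -\tfrac{k^2}{2\pi R}(\lambda+\tfrac{\sigma^2}{2}k^2+ik\omega)^{-1}\hat h_k(\omega)$; substituting into the first row with the scalar unknown $A:=\langle\hat p_k,g\rangle$ yields $\hat h_k(\omega)=-\pi C_{|k|}^\bullet A(\lambda-\tfrac{\sigma^2}{2}k^2+ik\omega)^{-1}$. Imposing self-consistency of $A$ forces either $A=0$ (no eigenfunction) or the scalar determinantal identity
\[
\frac{C_{|k|}^\bullet k^2}{2R}\int_\Omega \frac{g(\omega)\,\ud\omega}{(\lambda-\tfrac{\sigma^2}{2}k^2+ik\omega)(\lambda+\tfrac{\sigma^2}{2}k^2+ik\omega)}=1,
\]
which is exactly the defining equation of $S_d^{(k)}$; if $C_{|k|}^\bullet=0$ this identity has no solution, so $S_d^{(k)}=\emptyset$.

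The main obstacle is the functional-analytic book-keeping around the Fourier direct-sum decomposition: verifying that it is orthogonal on $\Hil^2$, that the unbounded differential part is realized as a closed operator compatible with the mode-by-mode splitting, and that every $\lambda$ satisfying the determinantal equation actually yields a bona fide $\Hil^2$-eigenvector rather than merely a formal one. The rank-one structure of the coupling makes this a Sherman--Morrison-type argument, and the strict separation between $\lambda\in S_d^{(k)}$ and $\lambda\in S_c^{(k)}$ (built into the boundedness of $(M_k-\lambda I)^{-1}$) rules out any hidden continuous-spectrum contribution in the discrete analysis.
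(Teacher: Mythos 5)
Your proposal is correct and takes essentially the same route as the paper's proof (found in Appendix~VII-D of~\cite{yin2011synchronization}): expand in $\theta$-Fourier modes to block-diagonalize $\Lscr_R$, identify the continuous spectrum with that of the matrix multiplication operator $M_k(\omega)$, and obtain the discrete spectrum from the self-consistency (characteristic) equation forced by the rank-one mean-field coupling. Your appeal to Weyl's theorem in place of the reference's explicit resolvent computation is only a minor stylistic difference.
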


\medskip

The points in $S_c^{(k)}$ are in one-one correspondence with the frequencies
in the support of the distribution $g(\omega)$.  That is, for each
$\omega_0\in\Omega$, the point $\pm\frac{\sigma^2}{2}k^2 - k\omega_0 i\in
S_c^{(k)}$ lies in the continuous spectrum. On the complex plane, $S_c^{(k)}$
comprises of two line segments, one in the left half-plane and the other in
the right half-plane.  The main thing to note is that the
continuous spectrum does not change with the value of $R$ and is moreover
bounded away from the imaginary axis for $k=\pm 1,\pm 2, \cdots$.  So, the
focus of the analysis is on the discrete
spectrum.  As follows from Thm.~\ref{t:Discrete_and_continuous}, the discrete
spectrum is obtained by solving the characteristic equation---the equality in \eqref{s:Skd}---for $k= 1, 2,\cdots$,  subject to $C_k^{\bullet} \ne 0$ so that the equation has a feasible solution. For negative values of $k$,
the eigenvalues are simply the complex conjugate.

\medskip

\begin{example}[Continued from \Ex{ex:cost_kura} and~\ref{ex:cost_kura_2}]
Consider $c^\bullet(\vartheta-\theta) =
\half\sin^2\left(\half [ \vartheta-\theta]\right)$.  In this case
$C_1^{\bullet}=-\tfrac{1}{4}$ and there is only one characteristic equation to
consider
\begin{align*}
\frac{1}{8R} \int_{\Omega}
        \frac{g(\omega)}{(\lambda -\frac{\sigma^2}{2} + \omega i)(\lambda +
	\frac{\sigma^2}{2} +  \omega i)}\ud\omega + 1 = 0
\label{eqn:chara}
\end{align*}
This equation was solved numerically to obtain
a path of eigenvalues as a function of $R$. In these calculations, 
we fixed $\sigma^2=0.1$ and
$g(\omega)$ the uniform distribution on $\Omega = [1-\gamma,1+\gamma]$.  

\begin{figure*}[ht]
    \centering
\includegraphics[width=\hsize]{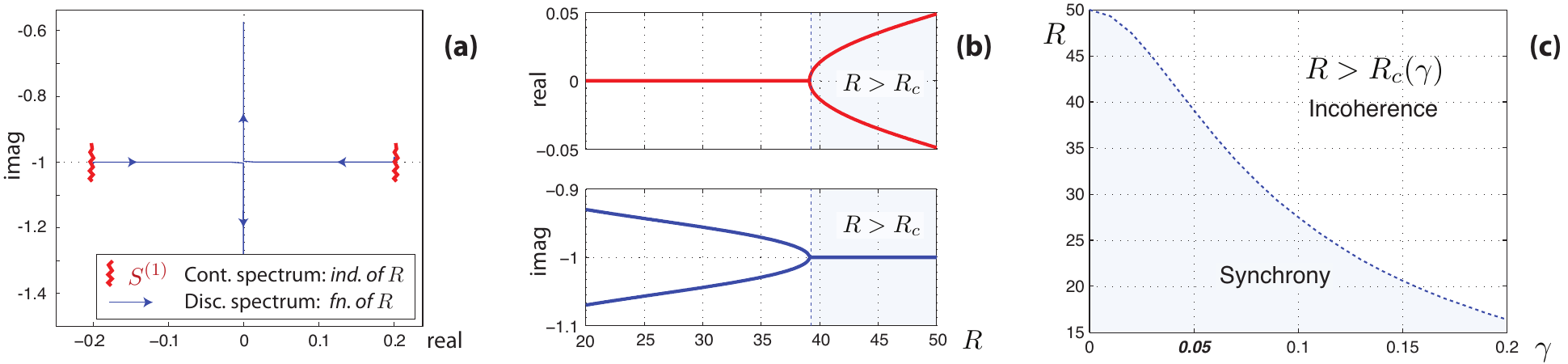}
    \caption{Spectrum as a function of $R$. (a) The continuous spectrum for
    $k = 1$, along with the two eigenvalue paths as $R$ decreases. (b) The
    real and imaginary parts of the two eigenvalue paths as $R$ decreases.
    (c) $R_c(\gamma)$ as a function of $\gamma$.}
    \label{fig:chara}
\end{figure*}

\Fig{fig:chara}~(a) depicts the resulting locus of eigenvalues
obtained with $\gamma=0.05$.  For $R\sim
\infty$ there are  a pair of complex eigenvalues at $\sim\pm\frac{\sigma^2}{2} -
i$.   As the parameter $R$ decreases,  these eigenvalues move
continuously towards the   imaginary axis.   The critical value $R_c$ is
defined as the value of $R$ at which these two eigenvalue paths collide on the
imaginary axis,  resulting in an eigenvalue pair of multiplicity $2$.   The
eigenvalues  split as $R$ is decreased further,  and remain on the imaginary
axis for $R < R_c$.  The real and the imaginary part of the two eigenvalue
paths originating at $\pm \frac{\sigma^2}{2} - i$ are depicted in
\Fig{fig:chara} (b). These eigenvalues also have their complex conjugate
counterparts (for $k=-1$) that are not depicted for the sake of clarity.

In (a) and (b) the value of $\gamma$ is fixed at $0.05$.  The critical value
$R_c$ is a function of the parameter $\gamma$.  \Fig{fig:chara} (c) depicts a
plot of $R_c(\gamma)$ as a function of $\gamma$.  For the uniform distribution
$g(\omega)$, the critical point also has an analytical expression:
\begin{align*}
	R_c(\gamma) = \begin{cases}
		\frac{1}{2 \sigma^4} & \text{ if }\, \gamma = 0,\\
		\frac{1}{4 \sigma^2
		\gamma}\tan^{-1}\left(\frac{2\gamma}{\sigma^2}\right)& \text{
		if }\, \gamma > 0.
	    \end{cases} 
\end{align*}
This formula is consistent with the expression for
\textit{critical coupling} for the
Kuramoto model in~\cite{strogatz91stability},  which defines the critical value for $\kappa$ in \eqref{e:Kuramoto}
by
$\kappa_c(\gamma) =
\frac{2\gamma}{\tan^{-1}\left( 2\gamma\sigma^{-2}\right)}$). 
\end{example}

For values of $R$ larger than the critical threshold $R_c(\gamma)$, we have the following stability conclusion for the linear initial value problem. 

\begin{theorem}[Thm.~~4.3 in~\cite{yin2011synchronization}]
Consider the linear initial value
problem~\eqref{eqn:linear_IVP} with
$c^\bullet(\vartheta-\theta)=
\half\sin^2\left(\frac{\vartheta-\theta}{2}\right)$.
For $R>R_c(\gamma)$,
\begin{romannum}
    \item{\bf Existence and uniqueness.} A unique solution $(\tilde{p}, \tilde{h}) \in \Ltwo(\Real^+,\Hil^2)$ exists. 
    \item{\bf Asymptotic stability of incoherence solution.} As $t\rightarrow\infty$, $\tilde p(\theta,t;\omega)
	\rightarrow 0$.
\end{romannum}
\end{theorem}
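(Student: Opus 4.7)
The plan is to diagonalize the linearized system by Fourier series in $\theta$. Writing $\tilh(\theta,t;\omega) = \sum_{k\ne 0} \tilh_k(t;\omega) e^{ik\theta}$ and similarly for $\tilp$, substitution into~\eqref{eqn:lin_PDE} uses the structural fact that for $c^\bullet(\phi) = \tfrac12 \sin^2(\phi/2)$ only $C_1^\bullet\ne 0$, so the mean-field coupling $\tilde c(\theta,t)$ excites only the $k=\pm 1$ Fourier modes. This decomposes the problem into uncoupled modes $|k|\ne 1$, and a coupled pair $k=\pm 1$ that carries the only nontrivial dynamics.

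For $|k|\ne 1$ the $\tilh_k$ equation is $\partial_t\tilh_k = (\tfrac{\sigma^2 k^2}{2} - ik\omega)\tilh_k$ with strictly positive growth rate in its real part, so the unique $\Ltwo(\Real^+)$ solution is $\tilh_k\equiv 0$. The $\tilp_k$ equation then decouples to $\partial_t\tilp_k = -(\tfrac{\sigma^2 k^2}{2} + ik\omega)\tilp_k$, whose explicit solution decays in $\Hil$ at rate at least $2\sigma^2$. Parseval's identity bounds the total contribution of these modes to $\|\tilp(\cdot,t;\cdot)\|_\Hil$ by an exponentially decaying multiple of $\|q\|_\Hil$, without requiring any condition on $R$.

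For $k=\pm 1$ I take the Laplace transform in $t$, solve algebraically for $\hat\tilh_{\pm 1}(\lambda;\omega)$ in terms of the mean-field quantity $M(\lambda):=\int_\Omega \hat\tilp_1(\lambda;\omega)g(\omega)\ud\omega$, and substitute back to obtain a scalar consistency equation whose denominator is precisely the characteristic function whose zero set defines $S_d^{(1)}$ in \Theorem{t:Discrete_and_continuous}. The hypothesis $R > R_c(\gamma)$, together with the bifurcation analysis in the example immediately preceding this theorem, places every root of this function in the open left half-plane. Consequently the resolvent applied to initial data of the form $(\tilh(0),\tilp(0))=(0,q)$ is analytic in a strip $\{\lambda:\mathrm{Re}\,\lambda \ge -\delta\}$ for some $\delta > 0$. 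Well-posedness follows from standard bounds on the resolvent along a vertical contour to the right of the spectrum; asymptotic stability $\|\tilp(\cdot,t;\cdot)\|_\Hil \to 0$ then follows by shifting the Bromwich contour leftward past all discrete eigenvalues.

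The main obstacle is the unstable branch of the continuous spectrum $S_c^{(1)}$ at real part $+\sigma^2/2$, which originates from the anti-diffusive term in the HJB equation and prevents any naive contour deformation. The key observation that unlocks the argument is that this branch belongs entirely to the $\tilh$-dynamics, while admissible initial data $(0,q)$ has zero projection onto the corresponding generalized eigenmodes; imposing $\tilh\in\Ltwo$ forces $\tilh$ to be a bounded solution driven by $\tilp$, so the effective scalar equation for $M(\lambda)$ carries no singularity on the unstable branch. Making this decoupling rigorous in the spectral calculus on $\Hil^2$---either by an explicit projection onto the stable invariant subspace or by a Gearhart--Pr\"uss type semigroup argument---is the step that absorbs the bulk of the technical work in Thm.~4.3 of \cite{yin2011synchronization}.
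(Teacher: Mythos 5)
Your skeleton matches what the survey points to (the proof is only cited here, as Appendix~VII-F of \cite{yin2011synchronization}): Fourier decomposition in $\theta$, the observation that $C_k^\bullet=0$ for $k\ge 2$ so only the $k=\pm1$ modes feel the mean-field coupling, exponential decay of the uncoupled $\tilp_k$ modes, and a Laplace-domain analysis of the $k=\pm1$ mode whose denominator is exactly the characteristic function in \Theorem{t:Discrete_and_continuous}, with $R>R_c(\gamma)$ placing the discrete spectrum in the open left half-plane. Two caveats: the location of the roots for $R>R_c(\gamma)$ is an analytic input that must be established (it is, in the cited paper, for uniform $g$), not something that can be read off the numerically computed eigenvalue paths in the preceding example.

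The genuine gap is your handling of $\tilh$ at $t=0$. The initial value problem \eqref{eqn:linear_IVP} prescribes only $\tilp(\theta,0;\omega)=q(\theta,\omega)$; $\tilh(\theta,0;\omega)$ is \emph{not} data but an unknown, determined by the requirement $(\tilh,\tilp)\in\Ltwo(\Real^+,\Hil^2)$ --- this is the forward--backward structure of the MFG linearization. Treating the system as a Cauchy problem with data $(\tilh(0),\tilp(0))=(0,q)$ fails for the coupled mode: with $\tilh_1(0)=0$ the forward solution is $\tilh_1(t)=-\int_0^t e^{(\sigma^2/2-i\omega)(t-s)}\,\tilde c_1(s)\ud s$, which generically grows like $e^{\sigma^2 t/2}$, so the claim that the resolvent applied to $(0,q)$ is analytic in a strip $\{\mathrm{Re}\,\lambda\ge-\delta\}$ is false, and ``zero projection of the data onto the generalized eigenmodes of the unstable branch of $S_c^{(1)}$'' is not the mechanism that removes that branch. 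The correct mechanism is the one you only reach in your closing sentence: the decay requirement selects $\tilh_1(t)=\int_t^\infty e^{(\sigma^2/2-i\omega)(t-s)}\,\tilde c_1(s)\ud s$, equivalently a removable-singularity condition at $\lambda=\sigma^2/2-i\omega$ in the Laplace domain that determines $\tilh_1(0)$ as a functional of $q$; the uniqueness assertion in the theorem is precisely uniqueness of this choice (for the uncoupled modes the same requirement forces $\tilh_k\equiv0$, as you say). So the Laplace-transform step should be run with $\tilh_1(0;\omega)$ carried as an unknown and eliminated by this no-unstable-pole condition, after which the consistency equation for $M(\lambda)$, the left-half-plane location of $S_d^{(1)}$, and the contour shift give existence, uniqueness, and $\|\tilp(\cdot,t;\cdot)\|_{\Hil}\to0$ as claimed; the Cauchy-problem/resolvent-strip paragraph should be deleted rather than repaired.
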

\begin{proof}
See Appendix~VII-F in~\cite{yin2011synchronization}.
\end{proof}

\subsection{Bifurcation and phase transition}
\label{ssec:bif+phasetrans}

In~\cite{yin2011synchronization,yin2011bifurcation}, bifurcation results are established for values of $R$ below the critical threshold $R_c(\gamma)$. Specifically, for the homogeneous population ($\gamma=0$), it is shown that below the critical threshold $R=R_c(\gamma)$ bifurcates a branch of traveling wave solutions
\begin{equation}
(p(\theta,t;\omega),h(\theta,t;\omega))=(v^2(\theta- a t),- \half {\sigma^2 R} \ln
v^2(\theta- a t)),\quad R < R_c(\gamma) 
\label{e:trwave}
\end{equation}
with wave-speed $a=1$ (see~\cite[Theorem 4.6, Corollary 4.1 and Remark 4]{yin2011synchronization} where additional details on the function $v$ are provided). 

An explanation for the wave-speed $a=1$ is that the frequencies $\omega^i$ in a homogeneous population setting are identical equal to $1$ when $\gamma=0$.
 A similar bifurcation result is also possible for the heterogeneous population.  In this case, the wave-speed $a$ depends upon $g(\omega)$~\cite{yin2011bifurcation}. For   ease of exposition, we focus discussion on the homogeneous population with $a=1$.

Based on the bifurcation analysis, we have identified the following two types of solutions for the PDE model~\eqref{eq:MFG_FB}: 
\begin{romannum}
    \item
    \textbf{Incoherence solution} (For $R>R_c(\gamma)$):
	\begin{align*}
	    p(\theta,t;\omega) = \frac{1}{2\pi},\quad
	    h(\theta,t;\omega) = 0\, 
	\end{align*}
    \item \textbf{Synchrony solution} (For $R<R_c(\gamma)$):  The traveling wave equation,
	\begin{align*}
	    \!\!\!\!\!\!\!
	   p(\theta,t;\omega) = p(\theta-at,0;\omega),\quad 
	   h(\theta,t;\omega) = h(\theta-at,0;\omega)
	\end{align*}
\end{romannum}

In the following, with a slight abuse of notation, $p(\theta;\omega)$ is
used to denote $p(\theta,0;\omega)$,   
 and similarly
$h(\theta;\omega):=h(\theta,0;\omega)$. 
The traveling wave
solution is obtained simply by rotating this solution with a constant
wave speed, $p(\theta,t;\omega)=p(\theta-at;\omega)$ and
$h(\theta,t;\omega)=h(\theta-at;\omega)$.

\subsection{Comparison with Kuramoto}
\label{ssec:comp_with_Kuramoto}

A comparison of the phase transition diagram with the Kuramoto and the MFG models (with cost
$c^\bullet(\vartheta,\theta) = \frac{1}{2} \sin^2 (\frac{\vartheta-\theta}{2})$
in \Ex{ex:cost_kura}) is depicted in \Fig{f:KuraPDEBifur}. The plot shown on the right in \Fig{f:KuraPDEBifur} depicts the phase transition for the MFG model: For $R>R_c$, the oscillators are incoherent, and for $R<R_c$
the oscillators synchronize. That is,   \textit{synchronization occurs when the
control is sufficiently cheap}.

\begin{figure*}[htp]
    \centering
    \includegraphics[scale=0.65]{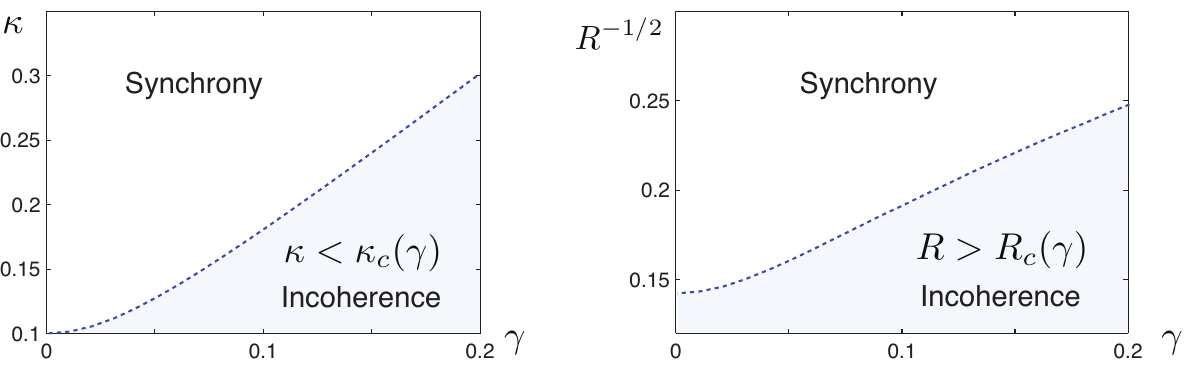}\vspace{-0.0in}
    \caption{Bifurcation diagrams. The Kuramoto model \eqref{e:Kuramoto} with $\sigma^2/2 = 0.05$
    (left), and the coupled model considered in this paper with $\sigma^2/2 =     0.05$ (right).}
    \vspace{-0.in}
    \label{f:KuraPDEBifur}
\end{figure*}

Recall that the MFG optimal control law is given by $u^* = -\frac{1}{R}\ptheta h(\theta,t;\omega)$. For the MFG solution, \Fig{fig:ControlComp} depicts the optimal control in relation to the density $p(\theta,t)=\int_\Omega p(\vartheta,t;\omega) g(\omega) \ud \omega$. Also depicted is the Kuramoto control ${u}^{\Kur}(\theta,t) := - \kappa \intsw \sin(\theta - \vartheta)
p(\vartheta,t;\omega) g(\omega)  \, \ud \omega\, \ud\vartheta$ (mean-field approximation based on formula~\eqref{e:Kuramoto_control}). The MFG solution is obtained via a numerical solution of the MF-HJB equation (see~\cite[Sec.~V]{yin2011synchronization}).   An appropriate value of the
parameter $\kappa$ is chosen to provide the comparison with the Kuramoto. 

The fact that the Kuramoto control is qualitatively similar to the optimal control provides a motivation for developing learning algorithms based on the parameterized form of the Kuramoto control law.  This is the subject of the next section.

\begin{figure}
    \centering
    \includegraphics[width=0.5\hsize]{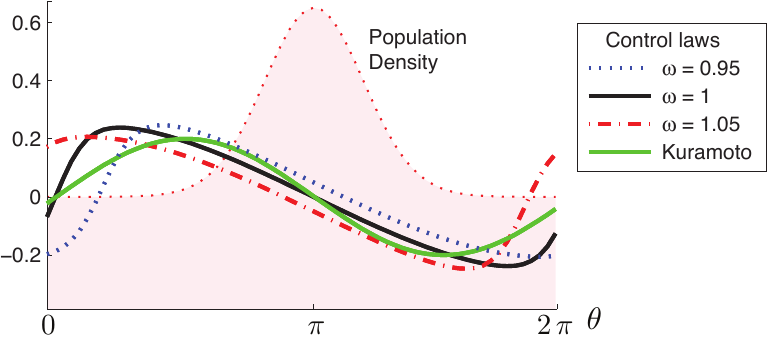}
    \caption{Comparison of the control obtained from solving the
      mean-field game PDE model and
    from Kuramoto model.}\label{fig:ControlComp}
\end{figure}

\section{Learning algorithms}
\label{sec:learning}

Consider the MFG optimal control problem~\eqref{eqn:obj} for the $i^{\text{th}}$ oscillator subject to the dynamics~\eqref{eq:theta_eqn_intro} with   cost $c^\bullet(\vartheta,\theta) = \frac{1}{2}
\sin^2\left(\frac{\vartheta-\theta}{2}\right)$ (recall \Ex{ex:cost_kura}). In principle, the forward-backward PDE~\eqref{eq:MFG_FB} can be solved numerically to obtain the optimal control law for the $i^{\text{th}}$ oscillator. 
  In practice, it is more reasonable to learn the best policy within some parameterized family.  
  
  In this section, taking inspiration from~\Fig{fig:ControlComp}, we consider 
\begin{equation}
\label{e:Kuramoto_control_parameter}
u_i(t)  = - \frac{1}{R} A_i(t) \frac{1}{N} \sum_{j=1}^N 
    \sin(\theta_i(t) - \theta_j(t) - \phz_i(t))
\end{equation}
    where $\{(A_i(t),\phz_i(t)):t\geq 0\}$ are the amplitude and phase parameters (compare with the Kuramoto control~\eqref{e:Kuramoto_control}).  The goal is to design learning rules to learn these parameters. The theory presented below is applicable to more general types of parameterizations~\cite{huibing_TAC14}.

\subsection{Mean-field approximate dynamic programming}
\label{sec:mf-adp}

Let $ \{ h^\alpha :  \alpha \in\clA  \subset  \Re^d\}$ denote a family of functions intended to approximate the relative value function appearing in \eqref{eq:p+h}.   
We describe here methods to choose the best parameter $\alpha^* \in \clA$,   followed by heuristics that led us to a specific choice for the function class.  

Following standard practice in reinforcement learning we introduce a version of the Q-function,
\begin{equation}
    \mfH^\alpha(\theta,t,u;\omega,p)  = \mfc(\theta,t;p) + \half Ru^2 +
    \gen_{u} \mfh^\alpha(\theta, t;\omega,p),\quad \theta\in[0,2\pi),\;\;t\geq 0, \;\; u\in\Re
  \label{e:mf_Qgen}
\end{equation}
where $\gen_u$ is the generator of the SDE~\eqref{eq:theta_eqn_intro},
and denote $\clH = \{ \mfH^\alpha :  \alpha\in\clA \}$.    
For any $\alpha\in\clA$ we obtain a policy motivated by 
 \eqref{eqn:optuPDE}:
 \begin{align}
\fee^\alpha(\theta,t;\omega) := -\frac{1}{R}\ptheta h^\alpha (\theta,t;\omega,p)
 \label{eqn:optuPDEalpha}
\end{align}

The notation $\mfc(\theta,t;p)$ is used to stress that it is dependent upon the density $p$ (see formula~\eqref{eqn:barc}).   
As with the function $\mfc$,  we allow the  approximations $\{ \mfh^\alpha\} $ and hence also  elements of $\clH$  to depend upon $p$.   In implementation the density $p$ is replaced by the empirical distribution.

For any $\mfH^\alpha \in \clH$ its minimum over $u$ is denoted
\begin{equation*}
\Hopt^\alpha(\theta,t;\omega,p) := \min_{u\in\Re}      \mfH^\alpha(\theta,t,u;\omega,p)  \,  ,\quad \theta\in[0,2\pi)  \,  \  t\ge 0
\label{e:Hoptalpha_gen}
\end{equation*}
If the function class $\clH$  is ideal, so that $h$ in 
 \eqref{eq:p+h} coincides with $h^{\alpha^*}$ for some $\alpha^*\in\clA$,  then the HJB equation becomes  
\begin{equation}
\Hopt^{\alpha^*} (\theta,t;\omega,p)   =   \eta^*(\omega)
				  \,  ,\quad \theta\in[0,2\pi)  \,  \  t\ge 0 
\label{e:HJB_gen}
\end{equation}
In this case, $\mfH^{\alpha^*}$ is the Hamiltonian of optimal control,
and $\fee^{\alpha^*} $ in  
 \eqref{eqn:optuPDEalpha} is the optimal policy.

A traveling wave ansatz is introduced to justify our choices in the design of the
 function class $\clH$,    motivated by   discussion   in~\Sec{ssec:bif+phasetrans}.   
 In particular,  the form of the solution  \eqref{e:trwave}  is rationale for   the 
assumption  $\mfc(\theta,t; p)=\wavec(\theta-a t;p)$ and 
 $h^{\alpha}\,
(\theta,t;\omega,p)=\waveh^{\alpha}\, (\theta-a t;\omega,p)$
in   \eqref{e:mf_Qgen}, 
which leads to  $\mfH^\alpha(\theta,t,u;\omega,p)  =   
\waveH^\alpha(\theta-a t, u;\omega,p)$ with  
 \begin{equation*}
    \waveH^\alpha(\theta, u; \omega,p)  = \wavec(\theta; p) + \half Ru^2 +  \gen_{u}
    \waveh^{\alpha}\, (\theta;\omega,p),\quad \theta\in[0,2\pi),\;\;u\in\Re 
    \label{e:Qalpha}
\end{equation*}
with minimum  
$\waveHopt^\alpha(\theta;\omega,p) := \min_{u\in\Re} \waveH^\alpha(\theta,u; \omega,p)$.    
The HJB equation \eqref{e:HJB_gen}  becomes  
\[
\waveHopt^{\alpha^*}(\theta;\omega,p) =\eta^* (\omega)  ,\quad \theta\in[0,2\pi)
\]

However, it is unlikely that the HJB equation can be solved  exactly for any element of $\clH$.     Rather, a criterion of fit is proposed based on the  the point-wise Bellman error, 
\begin{equation}
    \PointwiseErr (\theta;\omega,p) :=  \waveHopt^\alpha(\theta;\omega, p)
    -  \eta^\alpha(\omega;p),  \quad \theta\in[0,2\pi)
      \label{e:PointwiseErrInf}
\end{equation}
where $\eta^\alpha(\omega;p) = 
\frac{1}{2\pi} \int_{0}^{2\pi} \waveHopt^\alpha(\theta;\omega,p) \ud\theta$.

\subsection{Kuramoto-inspired parametrization}

The numerical experiments illustrated in
\Fig{fig:ControlComp} show that the optimal control
law for the MF-HJB equation closely approximates the Kuramoto control law. 
This suggests the two-dimensional function class $\clH$ defined using  $\alpha = (A (\omega);  \phz(\omega)) \in\clA = \Re^2$ (for each fixed known $\omega\in\Omega$), and
\begin{empheq}[left=\text{(P1)}\;\quad,box=\fbox]{align*}
 \hspace{10pt}
    \waveh^{\alpha}(\theta;\omega) = - A(\omega)\cos \left(\theta - \phz(\omega) \right)     
 \hspace{10pt}
\end{empheq}
Consequently, 
for any $\alpha\in\clA$ the policy 
 \eqref{eqn:optuPDEalpha}
 becomes,   
\begin{align}
    \fee^{\alpha}(\theta, t;\omega) & 
    =  -\frac{1}{R} 
    A(\omega)  \sin (\theta - at - \phz(\omega) )
    \label{eqn:mfADPControl}
\end{align}

While (P1) is based on consideration of only the first harmonic, a more general Fourier parameterization is described in~\cite[Sec.~III.C]{huibing_TAC14}.

The optimal parameter $\alpha^* $  is defined as the solution to the 
 Galerkin relaxation of the HJB equation,  defined as the solution to   
\begin{align}
\begin{aligned} 
    \langle \PointwiseErr,\cos\rangle := \int_{0}^{2\pi} \wavePointwiseErr (\theta;\omega, p)   \cos(\theta)  \, \ud \theta &=  0
    \\ 
    \langle \PointwiseErr,\sin\rangle:= \int_{0}^{2\pi} \wavePointwiseErr (\theta;\omega, p)   \sin(\theta)  \, \ud \theta &=  0
\end{aligned}
\label{eqn:Galerkin}
\end{align}
The definition of the projected Bellman equation in Q-learning is defined by a similar Galerkin relaxation \cite[Sections 5.4 and 9.3]{CSRL}.

An explicit  solution to \eqref{eqn:Galerkin}
appears in~\cite[Thm.~3.2]{huibing_TAC14}. Observe that since $\PointwiseErr$ depends also upon $p$,  
the solution shares this dependence:
$\alpha^* = (A^*(\omega);\phz^*(\omega)) = (  A^*(\omega;p) ; \phz^*(\omega;p) )$.

Estimation of $\alpha^*$ is based on a mean-square loss function,      $
    \waveGalErr(\alpha ;\omega,p) :=  |\langle\wavePointwiseErr
    ,\cos\rangle|^2 + |\langle\wavePointwiseErr ,\sin \rangle|^2
$,
whose gradient may be expressed 
\begin{align*}
    \frac{\partial \waveGalErr}{\partial A} &= \frac{\pi^2}{2}\, (\popamp) \Big\{ 4A
    [(\omega-1)^2 + (\Noise)^2] + [ (\omega-1)\sin\phz - \Noise \cos\phz]
    \Big\} 
    \\ 
    \frac{\partial \waveGalErr}{\partial \phz} &= \frac{\pi^2}{2}\, (\popamp) \, A \big [(\omega - 1)
      \cos\phz + \Noise \sin\phz \big]
\end{align*}
where $\wavepProjC := \intsw p(\theta;\omega)\cos(\theta)g(\omega) \ud \theta
    \ud\omega$ and $\wavepProjS := \intsw p(\theta;\omega)\sin(\theta)g(\omega) \ud \theta
 \ud\omega$.
 
 Hence one could in principle estimate $\alpha^*$ through the gradient flow,
 \begin{align*}
\begin{aligned}
    \frac{\ud A }{\ud t} (t;\omega) &= - \epsilon \, (\popamp) \Big\{ 4A[(\omega -
    1)^2 + (\Noise)^2] + [ (\omega - 1) \sin\phz - \Noise \cos\phz] \Big\} \\
    \frac{\ud \phz}{\ud t} (t;\omega) &= -\epsilon \, (\popamp) \, A [(\omega -
    1) \cos\phz + \Noise \sin\phz]
\end{aligned}
\end{align*}
where $\epsilon>0$ is a small constant.    This motivates a practical approach described in the following subsection.

\subsection{Learning algorithm and its analysis}

In a finite-$N$ setting, we approximate $
    \popamp \approx    \Gamma^2_N(t)$,  where
\begin{equation*}
    \Gamma^2_N(t) =: 
\left(\frac{1}{N}\sum_{j =1}^N \sin(\theta_j(t))
    \right)^2 + \left(\frac{1}{N}\sum_{j=1}^N \cos(\theta_j(t))\right)^2 
\end{equation*}
and the learning rule for the $\ith$-oscillator is given by,
\begin{subequations}
\begin{align}
    \frac{\ud A_i}{\ud t}(t) &= -\epsilon \, \Gamma^2_N(t) \Big\{ 4A_i(t)[(\omega_i
    - 1)^2 + (\Noise)^2] + [(\omega_i - 1) \sin\phz_i(t) - \Noise \cos\phz_i(t)]
    \Big\} \\ 
    \frac{\ud \phz_i}{\ud t}(t) &= -\epsilon \, \Gamma^2_N(t) \,  A_i(t) [(\omega_i -
    1) \cos\phz_i(t) + \Noise \sin\phz_i(t)]
\end{align}
This is the desired learning rule for the amplitude and the phase parameters of the Kuramoto control law~\eqref{e:Kuramoto_control_parameter}.   

\label{eqn:StoGradAlgP2_finite}
\end{subequations}

The asymptotic behavior (as $t\rightarrow\infty$) of the learning
algorithm~\eqref{eqn:StoGradAlgP2_finite} for the finite-$N$
model are described in the following theorem, whose proof may be found in  \cite[Appendix VIII-D]{huibing_TAC14}:

\begin{theorem}[Theorem 4.1 in~\cite{huibing_TAC14}] \label{lem:eqmP2}
Consider the ODE~\eqref{eqn:StoGradAlgP2_finite} for updating
parameter vector
$\alpha_i(t)=(A_i(t),\phz_i(t))\in\Re\times[0,2\pi]$.  Suppose
$\Gamma^2_N(t) > 0$.  Then
\begin{romannum}
    \item The equilibria are given by $\PtwoEqm^{(1)} := (A_i^*,\phz_i^*)$, 
	$\PtwoEqm^{(2)} := (-A_i^*,\phz_i^*-\pi)$, 
	$\PtwoEqm^{(3)} := (0,\phz_i^*-\frac{\pi}{2})$ and
	$\PtwoEqm^{(4)} := (0,\phz_i^*+\frac{\pi}{2})$, 
	where $A_i^*=A^*(\omega_i),\,\phz_i^*=\phz^*(\omega_i)$ is the Galerkin solution.  
\item The equilibria $\PtwoEqm^{(3)}$ and $\PtwoEqm^{(4)}$ are both 
  unstable.  
\item The equilibria $\{\PtwoEqm^{(1)},\PtwoEqm^{(2)}\}$ constitute the
	global attracting set: For almost every initial condition
        $\alpha_i(0)=(A_i(0),\phz_i(0))\in\Re\times[0,2\pi]$,
        $\alpha_i(t)\rightarrow \PtwoEqm^{(1)}$ or
        $\alpha_i(t)\rightarrow \PtwoEqm^{(2)}$, as
        $t\rightarrow\infty$.   
    \end{romannum}
\end{theorem}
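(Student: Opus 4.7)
The plan is to reduce the theorem, via a time change, to an autonomous planar gradient flow with an explicit potential, then classify its four critical points via the Hessian and close with a stable-manifold argument.

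Because $\Gamma^2_N(t) > 0$ enters the right-hand side of~\eqref{eqn:StoGradAlgP2_finite} merely as a positive scalar prefactor independent of $(A_i,\phz_i)$, I first reparametrize time by $\ud \tau = \epsilon \Gamma_N^2(t)\,\ud t$. This transforms~\eqref{eqn:StoGradAlgP2_finite} into the autonomous planar system $\tfrac{\ud A_i}{\ud\tau} = -F$, $\tfrac{\ud \phz_i}{\ud\tau} = -G$, where $F,G$ denote the bracketed expressions in the learning rule. A short computation shows $(F,G) = \nabla V$ for the smooth potential
\[
V(A,\phz;\omega_i) \;=\; 2\rho_i^2 A^2 + A\bigl[(\omega_i - 1)\sin\phz - \Noise \cos\phz\bigr], \qquad \rho_i^2 := (\omega_i - 1)^2 + (\Noise)^2,
\]
so the reduced problem is a planar gradient flow $\dot x = -\nabla V$ on $\Re \times [0,2\pi]$.

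To establish (i) I solve $\nabla V = 0$. The factorization $\partial_\phz V = A\bigl[(\omega_i - 1)\cos\phz + \Noise \sin\phz\bigr]$ splits the critical set into two branches. On $A = 0$ the remaining equation $\partial_A V = 0$ reduces to $(\omega_i - 1)\sin\phz - \Noise \cos\phz = 0$, whose two roots on $[0,2\pi)$ are $\phz_i^* \pm \pi/2$; these are $\PtwoEqm^{(3)}$ and $\PtwoEqm^{(4)}$. On the branch $(\omega_i - 1)\cos\phz + \Noise \sin\phz = 0$ the roots are $\phz_i^*$ and $\phz_i^* - \pi$, and $\partial_A V = 0$ then forces $A = \pm 1/(4\rho_i) = \pm A_i^*$; these are $\PtwoEqm^{(1)}$ and $\PtwoEqm^{(2)}$. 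A direct comparison with the Galerkin equations stated before~\eqref{eqn:StoGradAlgP2_finite} identifies $(A_i^*,\phz_i^*)$ with the Galerkin solution, as claimed.

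For (ii), at $\PtwoEqm^{(3)}$ and $\PtwoEqm^{(4)}$ the off-diagonal entry of $\nabla^2 V$ equals $(\omega_i - 1)\cos\phz + \Noise \sin\phz = \mp \rho_i \ne 0$, while $\partial^2_{\phz\phz}V = 0$ there (since $A = 0$); thus
\[
\nabla^2 V \;=\; \pmat{4\rho_i^2 & \mp\rho_i \\ \mp\rho_i & 0}, \qquad \det \nabla^2 V = -\rho_i^2 < 0,
\]
so both points are hyperbolic saddles of the gradient flow and hence unstable. For (iii), at $\PtwoEqm^{(1)}$ and $\PtwoEqm^{(2)}$ the off-diagonal entry vanishes by construction, while the diagonals equal $4\rho_i^2 > 0$ and $|A_i^*|\rho_i = \tfrac14 > 0$; hence $\nabla^2 V$ is positive definite and both points are strict local minima of $V$, hence asymptotically stable. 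Since $V(A,\phz) \to +\infty$ uniformly in $\phz$ as $|A| \to \infty$, sublevel sets of $V$ are compact and every trajectory is bounded; LaSalle's invariance principle with $V$ as Lyapunov function, combined with the discreteness of the four-point critical set and the strict monotonicity of $V$ along non-equilibrium trajectories, then forces every trajectory to converge to exactly one of the four equilibria. By the stable manifold theorem, the stable manifolds of the saddles $\PtwoEqm^{(3)}, \PtwoEqm^{(4)}$ are smooth one-dimensional immersed submanifolds of $\Re \times [0,2\pi]$ and are therefore Lebesgue-null; their complement is the union of the basins of $\PtwoEqm^{(1)}$ and $\PtwoEqm^{(2)}$, which establishes the ``almost every initial condition'' claim.

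The main obstacle is precisely this last ``almost every'' step: the eigenvalue classification alone does not rule out a positive-measure set of trajectories being trapped by the saddles, and the stable manifold theorem is the essential tool to exclude it. A minor technicality is that the time change transfers $t \to \infty$ asymptotics to $\tau \to \infty$ only when $\int_0^\infty \Gamma^2_N(s)\,\ud s = \infty$; the strict positivity hypothesis handles any finite horizon, and a standard non-degeneracy of the coupled empirical population suffices at infinity.
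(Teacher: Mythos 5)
Your proof is correct and follows essentially the same route as the paper's (which is deferred to \cite[Appendix VIII-D]{huibing_TAC14}): the learning rule is a positive scalar multiple of the gradient flow for the Galerkin loss $\waveGalErr$ (your $V$ is that loss up to a positive affine rescaling), so the critical-point/Hessian classification together with LaSalle and the stable-manifold theorem yields exactly statements (i)--(iii). The one caveat---that the time change covers $\tau\to\infty$ only if $\int_0^\infty \Gamma^2_N(s)\,\ud s=\infty$---is something you flag explicitly, and it is implicit in how the hypothesis $\Gamma^2_N(t)>0$ is intended to be read.
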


\begin{figure}
    \centering
    \includegraphics[width=.7\hsize]{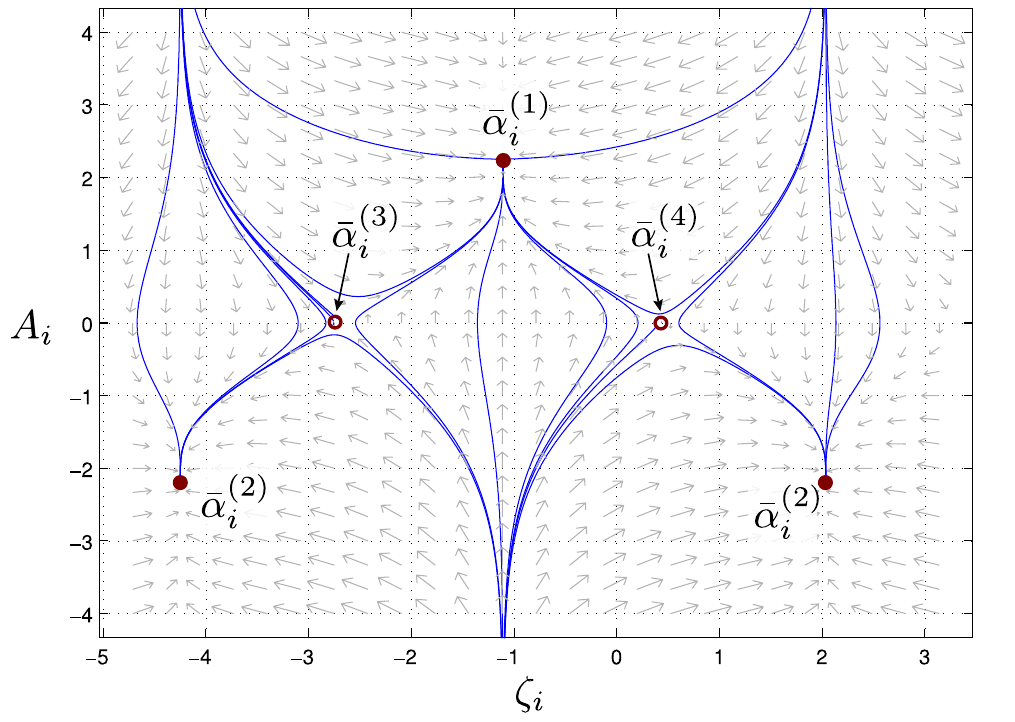}
    \vspace{-0.1in}
    \caption{Phase space plot for ODE~\eqref{eqn:StoGradAlgP2_finite}: The four equilibria are indicated
    by $\PtwoEqm^{(k)}$, $k = 1,\ldots,4$.
    }
    \label{fig:PhasePlot_StoGradAlgP2_mf}
    \vspace{-5pt}
\end{figure}

There are several remarks to be made concerning the assumptions and
conclusions of this theorem:

\begin{remark}
\label{rem:remark_learning}
  We make several remarks as follows:
\begin{romannum}
\item{\it Multiplicity of stable equilibria:} Either one of the two stable
  equilibria, $\{\PtwoEqm^{(1)}$ or $\PtwoEqm^{(2)}\}$, yields the same
  control law:
\[
u_i(t) = -\frac{A_i^*}{R} \frac{1}{N} \sum_{j =1}^N
    \sin(\theta_i(t) - \theta_j(t) - \phz_i^*)
\]
which coincides with the Galerkin control law~\eqref{eqn:mfADPControl} with $\alpha=\alpha^*$.  
\item{\it Phase space analysis:} The dynamics of the second order
  ODE~\eqref{eqn:StoGradAlgP2_finite} are visualized using the
  phase space plot depicted in \Fig{fig:PhasePlot_StoGradAlgP2_mf}. The parameter
  $\Gamma^2_N(t)$ determines the convergence time: For larger values
  of $\Gamma^2_N(t)$, it takes less time to converge to one of the two
  stable equilibria. 
\item{\it Justification of Assumption $\Gamma^2_N(t) > 0$:} The parameter
    $\Gamma^2_N(t)$ is a finite-$N$ approximation of the mean-field quantity
    $\popamp$. It is
    positive (resp.,~zero) when the population is in synchrony (resp.,~incoherence).  This shows that synchronization is useful for learning, in terms of quick convergence of parameters to their optimal values.

\item{\it  Relationship with Q-learning:}    The development here initially follows the theory leading to convex Q-learning in  \cite{mehmey09a,lumehmeyneu22,lumey23a},  
which proposes an alternative to the projected Bellman equation---the typical goal of Q-learning \cite{CSRL}.     In all of these 
approaches a version of the point-wise Bellman error  \eqref{e:PointwiseErrInf} arises, but it is a function of both state and action.   Consequently, the choice of input for training is a crucial part of any algorithm, as made abundantly clear in the recent stability analysis of Q-learning in \cite{mey24}.  
Special structure of the problem at hand leads to the simpler definition of Bellman error, and this is why there is no discussion of the exploration/exploitation tradeoffs in this section.
\end{romannum}
\end{remark}

The performance of the gradient descent algorithm in the finite-$N$ is explored  with the aid of simulations in the following
section.

\subsection{Numerics with the learning algorithm}

\begin{subequations}

The simulation results are described for a population of $N=200$ oscillators.  The
oscillator $i=1$ uses the learning update rule
\begin{align}
\ud\theta_1(t) &= \left(\omega_1 - \frac{A_1(t)}{R} 
    \frac{1}{N} \sum_{j=1}^N \sin(\theta_1(t) -\theta_j(t) - \phz_1(t))
    \right) \ud t  + \sigma \ud\xi_1(t)\\
    \frac{\ud A_1}{\ud t}(t) &= -\epsilon \, \Gamma^2_N(t) \Big\{ 4A_1(t)[(\omega_1
    - 1)^2 + (\Noise)^2]  + [(\omega_1 - 1) \sin\phz_1(t) - \Noise \cos\phz_1(t)]
    \Big\} \\ 
    \frac{\ud \zeta_1}{\ud t}(t) &= -\epsilon \, \Gamma^2_N(t) \,  A_1(t) [(\omega_1 -
    1) \cos\phz_1(t) + \Noise \sin\phz_1(t)]
\end{align} 
The remaining oscillators, $i=2,\hdots,N$, use the Kuramoto control law:
\begin{align*}
    \ud\theta_i(t) &= \left(\omega_i - \frac{\kappa}{N}\sum_{j=1}^N \sin(\theta_i(t) - \theta_j(t)) \right)\ud t +\sigma
    \ud \xi_i(t),\quad i=2,\hdots,N
\end{align*}
The frequency of the $i=1$ oscillator is taken to be $\omega_1 = 1.1$.  The remaining $N-1$ frequencies are sampled independently from the uniform
distribution on $\Omega=[0.9, 1.1]$, and $\sigma=0.1$.   

\label{e:finiteNalg}
\end{subequations}

\Fig{fig:LearningP2} depicts the results, trajectories for $A_1(t)$
and $\phz_1(t)$, for the two simulation cases:
\begin{romannum}
\item $\kappa = 0.01$ (population is in incoherence), and
\item $\kappa = 1$ (population is in synchrony).
\end{romannum}
Even though the incoherence and synchrony solution states are defined
for the limiting ($N\to\infty$) model, the terminology is useful here
to distinguish the population behavior in the two simulation cases.  

In the synchrony regime, the parameters converge \textit{quickly} to the
values $(A_i^*,\phz_i^*)$, consistent with the result in Thm.~\ref{lem:eqmP2}. Given the discussion
in Remark~\ref{rem:remark_learning}, it may be surprising to see that the
parameters converge even in the incoherence regime. The explanation is that
$\Gamma^2_N(t) = 0$ only in the limiting case ($N=\infty$).  For finite
$N$, it is small but non-zero.  So, the parameters converge to the optimal
values but relatively slowly.  The convergence is expected to get
progressively slower as $N$ increases.

\begin{figure}
\begin{center}
\includegraphics[width=0.75\hsize]{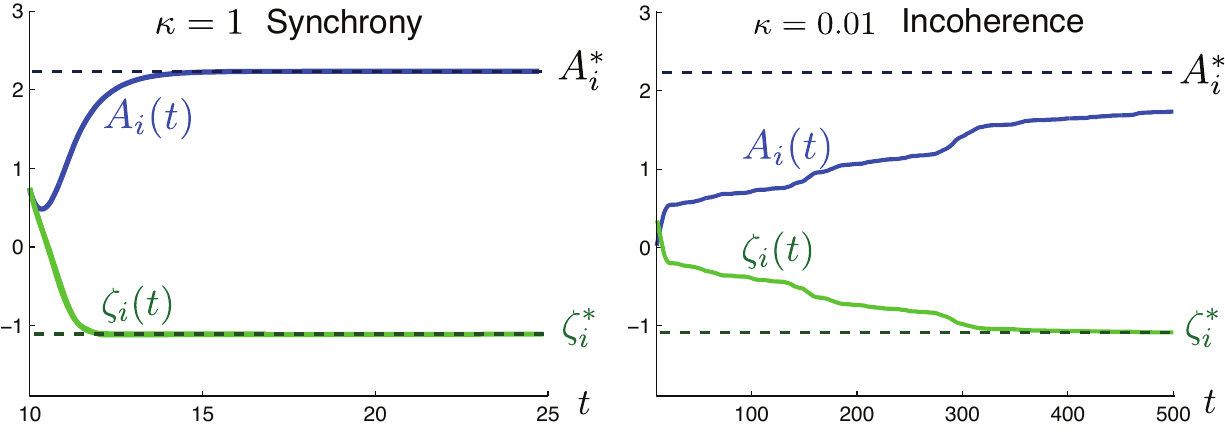}
\end{center}
    \caption{Simulation results with the learning algorithm: Synchronization
	of the population leads to a quicker convergence of control gain
	$A_i(t)$ and phase $\phz_i(t)$.%
    }
    \label{fig:LearningP2}
\end{figure}

\def\clZ{{\cal Z}}
\def\clL{{\cal L}}
\def\ast{{*}}
\def\fpfK{\textsf{K}}
\def\Inov{{\sf I}}
\def\T{[0,2\pi]}
\def\prechi{\psi}

\section{Coupled oscillator feedback particle filter}
\label{sec:FPF}

We turn next to approaches to state estimation for the oscillator model \eqref{e:osc}.  While we consider a single oscillator,  a mean field model emerges as a solution to the classical nonlinear filtering problem.

\begin{subequations}

We   introduce a scalar observation process $Z$   to obtain the   hidden Markov model 
    \begin{align}
        \ud \theta(t) &=\omega_0 \ud t + \sigma \ud \xi(t),\quad\text{mod}\;\;2\pi  \,, 
        \label{eqn:Signal_Process}
        \\
        \ud Z(t) &= h(\theta(t))\ud t + \ud W(t)
 \label{eqn:Obs_Process}
\end{align}
in which the interpretations of $\theta(t)\in [0,2\pi )$,  the nominal frequency  $\omega_0$,   and  $\xi:=\{\xi(t):t\geq 0\}$ are precisely as in 
\eqref{e:osc}.       In the observation model \eqref{eqn:Obs_Process} it is assumed that      
    $h:[0,2\pi)\mapsto \Re$ is a $C^1$ function, and 
    $W:=\{W(t):t\geq 0\}$ is a standard Wiener process.   The initial condition $\theta(0)$ is random, with   $C^2$  density   denoted   $p_0^\ast:[0,2\pi)\mapsto \Re^+$.   It is assumed that $\theta(0),\xi,W$ are mutually independent.

\end{subequations}  

The state  process $\{\theta(t):t\geq 0\}$ is partially  observed through the observation process   $\{Z(t):t\geq 0\}$.    
The objective of the filtering problem is to estimate the conditional density of $\theta(t)$ given the past history of observations (filtration) $\mathcal{Z}_t := \sigma(\{Z({\tau}):  \tau \le t\})$.    The model parameters 
  $\omega_0$, $h$  and $p_0^\ast$ are assumed to be known.

  The conditional density (also known as the posterior density) is denoted by $p^*(\theta,t)$ at time $t$,    defined so that, for any measurable set $A\subset [0,2\pi]$,
\begin{equation}
    \int_{\theta \in A} p^*(\theta,t)\, \ud \theta   = \Prob[ \theta(t) \in A\mid \clZ_t ],\quad t\geq 0
    \label{e:pXDef}
\end{equation}
The evolution of $p^*(\theta,t)$ is described by the Kushner-Stratonovich (K-S) stochastic PDE:
\begin{align*}
    \ud p^\ast(\theta,t) &= ({\cal D}^\dagger p^\ast)(\theta,t) \ud t + ( h(\theta)-\bar{h}(t) ) (\ud Z(t) - \bar{h}(t)
    \ud t)p^\ast(\theta,t), \quad \theta\in[0,2\pi),\;t\geq 0\\
      p^\ast(\theta,0) &= p_0^\ast(\theta),\quad \theta\in[0,2\pi)
\end{align*}
where  $ \bar{h}(t) = \int_{0}^{2\pi} h(\theta) p^*(\theta,t) \ud \theta$,  and 
\begin{equation*}
 {\cal D}^\dagger p^\ast(\theta,t) := - \omega_0 \frac{\partial p^\ast}{\partial \theta} (\theta,t)+
 \frac{\sigma^2}{2} \frac{\partial^2 p^\ast}{\partial \theta^2}(\theta,t), \quad \theta\in[0,2\pi),\;t\geq 0
 \label{eqn:dagger}
\end{equation*}
Here, $\frac{\partial }{\partial \theta}$ and $\frac{\partial^2
}{\partial \theta^2}$  denote the first and second partial derivatives with respect to $\theta$, respectively. Because the evolution of the posterior is given by a PDE, the filter is said to be infinite-dimensional.

\subsection{Coupled oscillator feedback particle filter}

Expressed in its Stratonovich form, the coupled oscillator feedback particle filter (FPF) is given by  
\begin{align}
    \ud \theta_i(t) = \omega_i \ud t + \sigma \ud \xi_i(t) + \fpfK(\theta_i(t),t) \circ \left(\ud Z_t - \frac{1}{2}\left(h(\theta_i(t)) +
    \hat{h}(t)\right)\ud t\right),\;\;\text{mod}\;2\pi \,, 
  \label{eqn:prelim_fpf_Strat}
\end{align}
where $\omega_i=\omega_0$,  $\hat{h}(t) := \int_{0}^{2\pi} h(\theta) p(\theta,t) \ud \theta$,
and
and  $\{ \theta_i(0) \}$ are i.i.d..

The gain function  $ \fpfK(\varble ,t) $ is in fact a stochastic process adapted to the observations,  defined as a function of the conditional density of a single particle.

Denote by $p(\varble,t)$   the   conditional  density of $\theta_i(t)$ given $\clZ_t$:      in analogy with   \eqref{e:pXDef},
 \begin{equation}
    \int_{\theta \in A} p(\theta,t)\, \ud \theta   = \Prob[ \theta_i(t) \in A\mid \clZ_t ],\quad t\geq 0
    \label{e:pXDefi}
\end{equation}
The density $p$  does not depend upon $i$ under the i.i.d.\ assumption for $\{ \theta_i(0) \}$.     
\begin{subequations}
We then let  $\phi$ denote the solution to   Poisson's equation  
\begin{equation}
    \label{eqn:EL_phi_prelim}
        - \frac{\partial}{\partial \theta}  \left[ p(\theta,t)
          \frac{\partial \phi}{\partial \theta} (\theta,t) \right] 
          	 =    \left[  h(\theta)-\hat{h}(t) \right]  p(\theta,t), \quad \theta\in[0,2\pi),\;t\geq 0
\end{equation}
 normalized with $        \int_{0}^{2\pi} \phi(\theta,t) \ud \theta  = 0$.    The gain function $\fpfK$ is its gradient:
\begin{equation}
    \fpfK  (\theta,t) = \frac{\partial\phi}{\partial \theta} (\theta,t)
    \label{eqn:gradient_gain_fn_prelim}
\end{equation}

\label{eq:PoissonEqn}
\end{subequations}

The evolution of $p(\varble,t)$ is given by a Fokker-Planck-Kolmogorov (FPK) equation~\cite[Prop.~3.1]{taoyang_TAC12}.  Its significance to the filtering problem comes from the following theorem whose proof appears in~\cite[Appendix F]{taoyang_TAC12}.

\begin{theorem}
    \label{thm:kushner}
 Suppose that the gain
    function is obtained according
    to~\eqref{eq:PoissonEqn},
 and  $\{ \theta_i(0) \}$ are sampled i.i.d.\ from  
    $p^*_0$.   
 Then,    
 \begin{equation*}
	p(\theta,t) = p^*(\theta,t),\quad \theta\in[0,2\pi),\;t>0
    \end{equation*}
\end{theorem}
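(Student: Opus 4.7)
The plan is to show that $p(\theta,t)$, the conditional density of a generic FPF particle $\theta_i(t)$ given $\clZ_t$, satisfies the same Kushner--Stratonovich SPDE that governs $p^\ast(\theta,t)$. Because $\{\theta_i(0)\}$ are sampled i.i.d.\ from $p_0^\ast$, one has $p(\theta,0)=p_0^\ast(\theta)=p^\ast(\theta,0)$, so uniqueness of solutions to the K--S equation will then force $p\equiv p^\ast$.

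To derive the SPDE satisfied by $p$, I would pick an arbitrary smooth $2\pi$-periodic test function $f$ and compute $\ud\Expect[f(\theta_i(t))\mid\clZ_t]$. Applying the Stratonovich chain rule along \eqref{eqn:prelim_fpf_Strat} and converting to It\^o form produces three families of terms: (i) the generator $\gen f(\theta)=\omega_0 f'(\theta)+\tfrac{\sigma^2}{2}f''(\theta)$ from the signal dynamics, (ii) an innovation term $f'(\theta_i)\fpfK(\theta_i,t)\,\ud Z_t$ from the observation coupling, and (iii) a $\ud t$ drift combining the explicit $-\tfrac{1}{2}(h(\theta_i)+\hat h(t))$ factor with the Stratonovich-to-It\^o correction, which is quadratic in $\fpfK$. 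After conditioning on $\clZ_t$ and integrating by parts against $p$ (boundary terms vanishing by $2\pi$-periodicity), one obtains a weak-form SPDE for $p$ of the type $\ud p=\gen^\dagger p\,\ud t+(\cdots)\,\ud Z_t+(\cdots)\,\ud t$.

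The crux is to verify that these coefficients match those of the K--S equation. The defining Poisson identity \eqref{eqn:EL_phi_prelim} gives $-\partial_\theta(p\,\fpfK)=(h-\hat h)p$, which (after a further integration by parts) converts the innovation term into $\int f(h-\hat h)p\,\ud\theta\cdot\ud Z_t$, precisely the coefficient of $\ud Z$ in the K--S SPDE. The Stratonovich correction, of the form $\tfrac{1}{2}\int \fpfK\,\partial_\theta(f'\fpfK)\,p\,\ud\theta$, can be simplified by an integration by parts that invokes $\partial_\theta(\fpfK p)=-(h-\hat h)p$ once more; when this is combined with the explicit $-\tfrac{1}{2}(h+\hat h)$ factor in \eqref{eqn:prelim_fpf_Strat} it reproduces exactly the $-\hat h\,\ud t$ drift that modifies $\ud Z_t$ in the K--S equation. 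This cancellation is the structural reason for writing the innovation in the symmetric form $\ud Z-\tfrac{1}{2}(h+\hat h)\,\ud t$.

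The main technical obstacle is the bookkeeping in the Stratonovich-to-It\^o conversion, since $\fpfK(\cdot,t)$ itself depends on $p(\cdot,t)$ and is therefore a $\clZ_t$-adapted semimartingale; its quadratic covariation with $Z$ (equivalently, with $W$) must be computed with care, and the interchange of conditional expectation with the differential operators and the integrations by parts must be justified. The $C^1$ regularity of $h$ and the $C^2$ regularity of $p_0^\ast$, together with standard existence theory for the K--S equation on the compact torus $[0,2\pi)$, are sufficient for this. Once the term-by-term matching with K--S is established, uniqueness for the (normalized) K--S SPDE closes the argument and yields $p(\theta,t)=p^\ast(\theta,t)$ for all $t>0$.
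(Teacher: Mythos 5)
Your proposal is correct and follows essentially the same route as the proof the paper points to (\cite[Prop.~3.1 and Appendix~F]{taoyang_TAC12}): derive the evolution SPDE for the conditional particle density via the Stratonovich-to-It\^o conversion, use the Poisson equation \eqref{eqn:EL_phi_prelim} to show term-by-term agreement with the Kushner--Stratonovich equation, and conclude from the common initialization $p(\cdot,0)=p_0^\ast$ and uniqueness. No substantive differences to flag.
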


\begin{remark}
  \Theorem{thm:kushner} is based on the \textit{ideal setting} in which the gain $\fpfK(\theta_i(t),t)$ and the prediction $\hat{h}(t)$ are obtained as a function of the conditional density $p$ for $\theta_i(t)$  (equivalently, obtained from the conditional density $p^*$    for $\theta(t)$  in view of the theorem).      
  In practice, the algorithm is applied with $p$ replaced by the empirical distribution of the $N$ particles.  For example, with $N$ particles, $\hat{h}(t) \approx \hat{h}^{(N)}(t):= \frac{1}{N} \sum_{i=1}^N h(\theta_i(t))$.  For additional details on the FPF algorithm, see our recent review papers~\cite{taghvaei2023survey,taghvaei2021optimal}.
\end{remark}

\subsection{Galerkin algorithm for solving the BVP}
\label{sec:COPF}
 
The main issue in implementing the FPF algorithm is to obtain the
solution to the BVP~\eqref{eqn:EL_phi_prelim} at each time step.  In
this section, a Galerkin algorithm for the same is described.  

\paragraph{Weak formulation.}
The notation $L^2([0,2\pi];p)$ is used to denote the Hilbert
space of $2\pi$-periodic functions on $[0,2\pi]$ that are
square-integrable with respect to density $p$;
$H^k(\T;p)$ is used to denote the Hilbert space of functions
whose first $k$-derivatives (defined in the weak sense) are in
$L^2(\T;p)$. Denote
\[
H_0^1(\T;p) := \left\{ \phi\in H^1 \,\Big|\, \int
\phi(\theta) \ud \theta =0 \right\}
\]
A function $\phi \in H_0^1(\T;p)$ is said to be a weak solution of the
BVP~\eqref{eqn:EL_phi_prelim} if
\begin{align*}
    \int \frac{\partial \phi}{\partial \theta} (\theta,t)
    \frac{\ud \psi}{\ud \theta}(\theta) \; p(\theta,t) \ud
    \theta 
     = \int  (h(\theta)-\hat{h}(t)) \psi(\theta) \; p(\theta,t) \ud
    \theta
\end{align*}
for all $\psi \in H^1(\T;p)$.  
Denoting $\Expect [\cdot]:= \int \cdot \; p(\theta,t) \ud \theta$, the weak
form can be expressed as follows:
\begin{equation}
    \Expect [\phi' \psi' ] = \Expect[  (h -\hat{h}(t))
    \psi],\quad\forall\,\,\psi\in H^1(\T;p)
\label{eqn:EL_phi_expect}
\end{equation}
where $\phi':=\frac{\partial \phi}{\partial \theta}$ and $\psi':=\frac{\ud \psi}{\ud \theta}$.

\paragraph{Finite-dimensional approximation.} One begins by pre-selecting a finite set of basis functions $\{\prechi_l\}_{l=1}^L$ where $\prechi_l:[0,2\pi)\mapsto \Re$ for $l=1,2,\hdots,L$. In terms of these basis functions, the gain function $\phi(\theta,t)$ is approximated as,
\begin{equation}
    \phi(\theta,t) = \sum_{l=1}^L \kappa_l(t) \prechi_l(\theta)\nonumber
\end{equation}
To compute the coefficients $\{\kappa_l(t):l=1,2,\hdots,L\}$, the Galerkin approximation of the weak form~\eqref{eqn:EL_phi_expect} is as follows:
\begin{equation*}
    \sum_{l=1}^L \kappa_l(t) \Expect[ \psi'_l \; \prechi_k' ] =
    \Expect [  (h-\hat{h}(t)) \psi_k ],\quad\forall\,\,k=1,2,\hdots,L
\end{equation*}
Denoting $[A]_{kl} = \Expect[ \prechi'_l \; \prechi'_k ]$, $b_k
= \Expect [  (h-\hat{h}(t)) \prechi_k ]$,
$\kappa=(\kappa_1(t),\kappa_2(t),\hdots,\kappa_L(t))^T$, the matrix form for the same is
\begin{equation*}
    A\kappa(t) = b
\end{equation*}
In a numerical implementation, the matrix $A$ and vector $b$ are approximated empirically as 
\begin{align*}
    [A]_{kl} & = \Expect[ \prechi'_l \;  \prechi'_k ] \approx
    \frac{1}{N} \sum_{i=1}^N \prechi'_l (\theta_i(t)) \; 
    \prechi'_k (\theta_i(t)), 
    \\
    b_k & = \Expect [  (h-\hat{h}(t)) \psi_k ]  \approx \frac{1}{N} \sum_{i=1}^N
    (h(\theta_i(t)) - \hat{h}^{(N)}(t)) \psi_k(\theta_i(t)), 
\end{align*}
Finally, using~\eqref{eqn:gradient_gain_fn_prelim}, the gain function $\fpfK$ is obtained as,
\begin{equation*}
    \fpfK(\theta,t) = \frac{\partial \phi}{\partial \theta} (\theta,t)=
    \sum_{l=1}^L \kappa_l(t) 
    \prechi'_l (\theta)
\end{equation*}

\medskip

\begin{example}\label{ex:ex_gain_cos}
\textup{
A natural choice of basis functions $S =\{\cos(\theta),\;\sin(\theta)\}$. Then 
\begin{equation*}
    \fpfK(\theta,t) = - \kappa_1(t) \sin(\theta) + \kappa_2(t) \cos(\theta)
\end{equation*}
where $[\kappa_1(t), \kappa_2(t)]$ are obtained at each time by
solving the linear matrix equation: 
\begin{align*}
    \left[ \begin{array}{cc} 
        \frac{1}{N} \sum_{j} \sin^2(\theta_j(t)) & - \frac{1}{N} \sum_j
        \sin(\theta_j(t)) \cos(\theta_j(t)) 
        \\
        - \frac{1}{N} \sum_j \sin(\theta_j(t)) \cos(\theta_j(t)) & \frac{1}{N} \sum_j
        \cos^2(\theta_j(t)) 
    \end{array} \right]
    \left[ \begin{array}{c} 
        \kappa_1(t) 
        \\
        \kappa_2(t) 
    \end{array}\right] \nonumber
     \\
     = { \left[ \begin{array}{c} 
        \frac{1}{N} \sum_j \left( h(\theta_j(t)) - \frac{1}{N} \sum_{i}
        h(\theta_i(t)) \right) \cos(\theta_j(t)) 
        \\
        \frac{1}{N} \sum_j \left( h(\theta_j(t)) - \frac{1}{N} \sum_{i}
        h(\theta_i(t)) \right) \sin(\theta_j(t)) 
    \end{array}\right]}
\end{align*}
With $h(\theta)=\cos(\theta)$, the evaluation of the coefficients requires
computation of empirical averages $\frac{1}{N} \sum_i \sin(\theta_i(t))$,
$\frac{1}{N} \sum_i \cos(\theta_i(t))$, $\frac{1}{N} \sum_i
\sin(2\theta_i(t))$ and $\frac{1}{N} \sum_i \cos(2\theta_i(t))$ at each
time step.}
\end{example}

\subsection{Numerics}

We describe the results of a numerical experiments for the noise-free phase model $\theta(t) = \omega_0 t$ with $\omega_0=1$, and noisy observations using $h(\theta)=\cos(\theta)$. The coupled oscillator FPF is simulated with $N=1000$ particles.  For
$i=1,\hdots,N$:
\begin{equation*}
    \ud \theta_i(t) = \omega_i \ud t + \sigma_B \ud \xi_i(t) + \langle \kappa(t),
    \prechi'(\theta_i(t))\rangle \circ \left( \ud Z(t) - \frac{1}{2}    (h(\theta_i(t)) +
    \hat{h}^{(N)}(t)) \ud t \right) ,\quad\text{mod}\;\;2\pi
\end{equation*}
where the initial conditions $\{\theta_i(0):i=1,2,\hdots,N\}$ are sampled i.i.d from a uniform distribution on $[0,2\pi)$. The filter frequencies $\omega_i$ are sampled from a uniform distribution on $[1-\gamma,1+\gamma]$ with $\gamma=0.5$ and a small process noise is included by chosing $\sigma_B=0.1$.  The gain $\kappa(t) = [\kappa_1(t),\kappa_2(t)]$ is obtained at each discrete time-step $t$ by solving the $2\times 2$ linear equation described in \Ex{ex:ex_gain_cos}. The notation $\langle \kappa(t),
    \prechi'(\theta_i(t))\rangle= -\kappa_1(t)\sin(\theta_i(t)) + \kappa_2(t)\cos(\theta_i(t))$. The filter is simulated using Euler discretization with time-step $\Delta t = 0.01$.

\begin{figure*}[h]
\centering
        \includegraphics[width=\hsize]{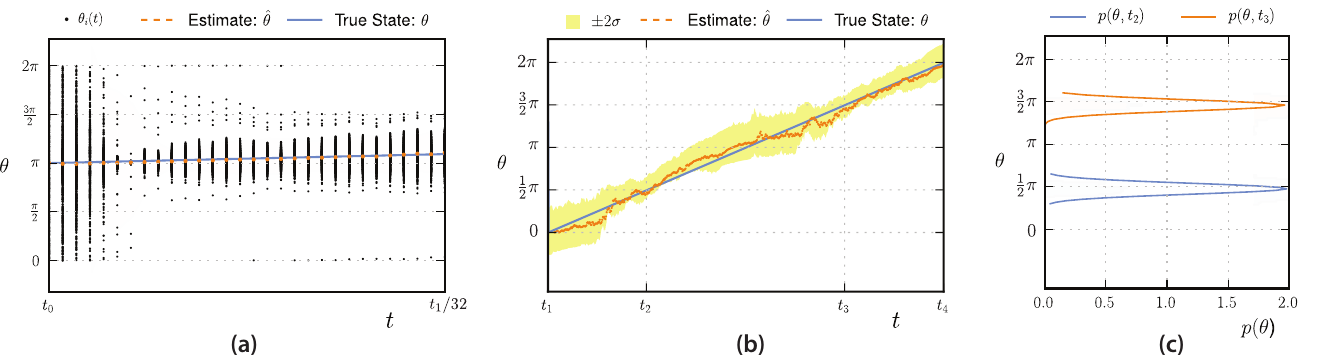}
    \caption{Summary of numerical results for example 1:
    (a) Initial transients as particles converge starting
    from a uniform initial distribution; (b) Comparison
    of estimated mean $\{\hat{\theta}^{(N)}(t)\}$ with the true state
    $\{\theta(t)\}$, where the shaded region show $\pm$ two standard
    deviations; and (c) representative empirical
    distributions at two times, $t_2$ and $t_3$.}
    \label{fig:single_hopf}
\end{figure*}

\Fig{fig:single_hopf} depicts numerical results after the initial transients have converged. A single cycle is depicted in part~(b): the figure compares the sample path of the actual state $\theta(t)$ (as a solid line) with the estimated mean $\hat{\theta}^{(N)}(t)$ (as a dashed line). The shaded area indicates $\pm$ two standard deviation bounds.  The density at two time instants is depicted in part~(c).  These results show that the filter is able to track the phase accurately.  

Note that the exact filter requires $\sigma_B=0$ and $\omega_i=\omega_0=1$.  A small noise is included and the frequencies are picked from a distribution to (i) highlight the robustness of the proposed algorithm and (ii) because such a model is more realistic from an implementation standpoint.  The study also invites an investigation of the synchronization phenomena in coupled oscillator FPF.  This is the subject of future work.

\section{Conclusions and directions for future work}
\label{sec:conc}

The field of MFGs combines  concepts and techniques from nonlinear dynamical systems, game theory, and statistical mechanics.   While initially formulated in \cite{huang06large,lasry07mean} as an approach to distributed control, we have surveyed here how MGF techniques also provide a framework for state estimation.  See the recent survey \cite{taghvaei2023survey}  
for more on
future directions for research in the context of state estimation.  Prior work \cite{mehmey09a,busmeycam23} along with the theory from \cite{huibing_TAC14} surveyed here demonstrates how similar techniques can be used as a part of algorithms for  distributed learning.

Our focus on coupled oscillators comes from the relevance of such models to neuroscience, specifically, to the various hypotheses around the role of synchronization and neural rhythms in cortical circuits.  In this regard, it will be useful to extend the mathematics to settings of multiple populations with an architecture inspired by cortex.  It will also be useful to relate some of the model predictions to the in vivo neural recordings of perceptual learning and inference tasks.

\wham{Acknowledgement}

Over the years, a number of students and colleagues have contributed to this research: Rohan Arora, Heng-Sheng Chang, Shane Ghiotto, Jin-Won Kim, Yagiz Olmez, Uday Shanbhag, Amirhossein Taghvaei, Adam Tilton, Tixian Wang, Tao Yang, and Huibing Yin. These contributions and collaborations are gratefully acknowledged.

\newpage

\bibliographystyle{abbrv}


\def\cprime{$'$}\def\cprime{$'$}

\end{document}